\newtheorem{teo}{Theorem}
\newtheorem{lem}{Lemma}
 \title{{\bf    Riemann-Liouville  Operator in  Weighted $\mathbf{L_{p}}$ Spaces  via  the Jacobi Series  Expansion  }}
\author{Maksim \,V.~Kukushkin   \\ \\
   \small  \textit{Moscow State University of Civil Engineering}\\\textit{\small\textit{Russia, Moscow, 129337,}}\\
 \small  \textit{Kabardino-Balkarian Scientific Center, RAS, }\\\textit{\small\textit{Russia, Nalchik, 360051, kukushkinmv@rambler.ru}} }
\date{}
\begin{document}

\maketitle

\begin{abstract}
In this paper we  use the   orthogonal  system of the  Jacobi  polynomials as a tool  to study   the Riemann-Liouville    fractional integral  and derivative  operators  on a  compact of the real axis.
This approach has some advantages and allows us to complete  the previously known results of the fractional calculus theory by means of  reformulating them  in  a new quality.
     The proved  theorem on   the   fractional integral operator action   is  formulated in terms of   the Jacobi series    coefficients and is of particular interest.   We obtain a sufficient condition  for  a representation of a function  by the fractional integral in terms of   the  Jacobi series  coefficients.
      We   consider  several modifications  of the Jacobi  polynomials what gives  us an opportunity to study the  invariant property of the Riemann-Liouville  operator. In this direction,   we have shown that  the fractional integral  operator acting in the weighted  spaces of Lebesgue  square integrable  functions
   has  a sequence of the  included  invariant subspaces.

\end{abstract}
\begin{small}\textbf{Keywords:} Fractional derivative; fractional integral;  Riemann-Liouville  operator;  Jacobi  polynomials; Legendre  polynomials;
  invariant subspace.\\\\
{\textbf{MSC} 26A33; 47A15; 47A46; 12E10.}
\end{small}

\section{Introduction}
First, in this paper  we  aim  to reformulate  the   well-known theorems on  the  Riemann-Liouville  operator action   in terms of    the Jacobi  series  coefficients. In spite of that   this type of  problems was well studied by  such  mathematicians   as  Rubin B.S. \cite{firstab_lit: Rubin},\cite{firstab_lit: Rubin 1},\cite{firstab_lit: Rubin 2}, Vakulov B.G. \cite{firstab_lit: Vaculov},   Samko S.G.  \cite{firstab_lit: Samko M. Murdaev},\cite{firstab_lit: Samko Vakulov B. G.}, Karapetyants N.K.
\cite{firstab_lit: Karapetyants N. K. Rubin B. S. 1},\cite{firstab_lit: Karapetyants N. K. Rubin B. S. 2}
  (the results of \cite{firstab_lit: Karapetyants N. K. Rubin B. S. 1},\cite{firstab_lit: Rubin},\cite{firstab_lit: Rubin 1} are also  presented in \cite{firstab_lit:samko1987})   in several spaces and for various generalizations of the fractional integral  operator,    the method suggested in this work   allows us to notice   interesting properties  of the fractional integral and fractional derivative operators.  We suggest using   properties of the  Jacobi polynomials for studying the Riemann-Liouville operator, but we should make a remark that this idea was previously used  in the following papers \cite{firstab_lit Saad},\cite{firstab_lit:  Indian},\cite{firstab_lit:  Iranian society},\cite{firstab_lit:E.H. Doha},\cite{firstab_lit: Fr. oder Leg. functions},\cite{firstab_lit: SHENG CHEN}.
 For instance: in the papers \cite{firstab_lit:  Iranian society},\cite{firstab_lit:E.H. Doha} the   operational matrices of the Riemann-Liouville fractional integral and the Caputo fractional derivative  for shifted Jacobi
polynomials were  considered, in the paper \cite{firstab_lit:  Indian} the   fractional derivative formula was obtained applicably to the  general class of polynomials introduced by Srivastava, in the paper \cite{firstab_lit: Fr. oder Leg. functions}    a general formulation for the fractional-order Legendre functions   was
constructed to obtain the solution of the fractional order differential equations. Also, which  is interesting in  itself, the  fractional calculus theory was applied in \cite{firstab_lit(JFCA)Multivariabl},\cite{firstab_lit:  Abstract and Applied Analysis  ekim B},\cite{firstab_lit: Gogovcheva} to study the Jacobi polynomials.
However, our main  interest lies  in a rather different  field of studying   the mapping theorems for the Riemann-Liouville operator via the Jacobi polynomials.   This approach gives us such an advantage as getting results in terms of the Jacobi series coefficients, let alone the concrete achievements.
 The central point of our method of studying is to use the basis property of the Jacobi polynomials system. In  this way we aim to obtain a sufficient condition of existence and uniqueness of  the Abel equation solution  with the right part belonging to the weighted space of Lebesgue p-th integrable functions. Also, the usage of the weak topology gives us an opportunity to cover some cases in the mapping theorems  that were not previously obtained.
  Besides, having filled some conditions  gaps and formulated  the unified result,  we  aim to  systematize the mapping theorems established in  the monograph \cite{firstab_lit:samko1987}.

Secondly, we notice that
the question on existence of a non-trivial  invariant subspace for an arbitrary linear operator acting in a Hilbert space is still relevant for today. In 1935  J. von Neumann  proved that an arbitrary non-zero compact operator acting in a Hilbert space  has a  non-trivial invariant subspace \cite{firstab_lit:Aronszajne}. This approach  had got the further generalizations in the works \cite{firstab_lit:Bernstain},\cite{firstab_lit:Halmos}, but  the established  results are based on  the compact property of the operator. In the  general case the results \cite{firstab_lit:Lomonosov},\cite{firstab_lit:Nagy} are of particular   interest. The overview of   results in this direction can be found in \cite{firstab_lit:Helson},\cite{firstab_lit:Danford},\cite{firstab_lit:Halmos 1}. Due to many difficulties in solving  this  problem in the general case,  some scientists have paid attention to  special cases and one of these cases was the Volterra integral operator acting in the space  of  Lebesgue     square-integrable   functions on a compact of the  real axis.
 The  invariant subspaces of this operator were carefully studied and described in the papers \cite{firstab_lit:Brodsky},\cite{firstab_lit:Donoghue},\cite{firstab_lit:Kalisch}.
   We make an attempt  to study     invariant subspaces of the Riemann-Liouville fractional integral operator acting in the  weighted  space  of  Lebesgue square-integrable  functions on a compact of the  real axis. In this regard the following question is relevant: whether the Riemann-Liouville fractional integral has such an invariant subspace on which one would be selfadjoint.

   The paper is organized as follows: In the second  section the auxiliary formulas of fractional calculus are given as well as  a brief remark on the Jacobi polynomials system basis property. In the  third  section the main results are presented,   the mapping theorems established in  the monograph \cite{firstab_lit:samko1987}  were  systematized and  reformulated in terms of the Jacoby series coefficients,  the invariant subspaces of the Riemann-Liouville operator were studied. The   conclusions are given   in the  fourth  section.
\section{Preliminaries }
\subsection{Some fractional calculus  formulas }

Throughout this paper   we consider  complex functions of a real variable, we use the following denotation for   weighted complex  Lebesgue spaces     $L_{p}(I , \omega),\,1\leq p<\infty,$  where $I=(a,b)$ is an interval  of the real axis and the  weighted function $\omega$ is a real-valued function. Also we use the denotation $p'=p/(p-1).$ If $\omega=1,$ then we use the     notation  $L_{p}(I).$
Using the denotations of the paper \cite{firstab_lit:samko1987}, let us  define  the left-side,  right-side fractional integrals and   derivatives  of   real order respectively
 $$
\left(I_{a+}^{\alpha}f\right)(x)=\frac{1}{\Gamma(\alpha)} \int\limits_{a}^{x}\frac{f(t)}{(x-t)^{1-\alpha}}\,dt,\;\left(I_{b-}^{\alpha}f\right)(x)=\frac{1}{\Gamma(\alpha)} \int\limits_{x}^{b}\frac{f(t)}{(t-x)^{1-\alpha}}\,dt,\;f\in L_{1}(I);
 $$
$$
\left(D^{\alpha}_{a+}f\right)(x)=\frac{d^{n}}{dx^{n}} \left(I_{a+}^{n-\alpha}f\right)(x),\,f\in I_{a+}^{\alpha}(L_{1});   \;                  \left(D^{\alpha}_{b-}f\right)(x)= (-1)^{n}\frac{d^{n}}{dx^{n}}\left( I_{a+}^{n-\alpha}f\right)(x),\,
f\in I_{b-}^{\alpha}(L_{1}),
$$
$$
\,\alpha\geq0,\,n=[\alpha]+1,
$$
where $I_{a+}^{\alpha}(L_{1}),\,I_{b-}^{\alpha}(L_{1})$ are the classes of functions which can be represented by the fractional integrals    (see\cite[p.43]{firstab_lit:samko1987}).
    Further, we   use as a domain of definition of the   fractional differential   operators   mainly the set of   polynomials on which these operators are well defined.   We   use the shorthand notation $L_{2}:=L_{2}(I)$ and  denote by $(\cdot,\cdot) $ the inner product on  the Hilbert space $L_{2}(I).$
Using   Definition 1.5 \cite[p.4]{firstab_lit:samko1987}  we consider the space
$H_{0}^{\lambda}(\bar{I},r):=\{f:\,f(x)r(x)\in H^{\lambda}(\bar{I}),\, f(a)r(a)=f(b)r(b)=0\}$ endowed  with the norm
$$
\| f\|_{H_{0}^{\lambda}(\bar{I}\!,\,r)}=
\max\limits_{x\in I}|f(x)r(x)|+\sup\limits_{ \stackrel{x_{1},x_{2}\in I }{x_{1}\neq x_{2}}} \frac{|f(x_{1})r(x_{1})-f(x_{2})r(x_{2})|}{|x_{1}-x_{2}|^{\lambda}},\;r(x)=(x-a)^{\beta}(b-x)^{\gamma},\,\beta,\gamma\in \mathbb{R}.
$$
Denote   by   $ C,C_{i} ,\;i\in \mathbb{N} $   positive real  constants.   We mean that the values  of $C$   can be different in     various parts of    formulas,  but  the values of $C_{i} ,\;i\in \mathbb{N} $ are  certain.
 We use   the following  special denotation
$$
\binom{\eta} {\mu}:=\Gamma(\eta+1)/\Gamma(\eta-\mu+1),\;\eta,\mu\in \mathbb{R},\,\mu\neq-1,-2,...\,.
$$
Further, we need the following formulas for    multiple integrals. Note that  under the  assumption   $\varphi\in L_{1}(I),$ we have
\begin{equation}\label{1}
 \frac{1}{\Gamma( \alpha-m)}\underbrace{\int\limits_{a}^{x}dx\int\limits_{a}^{x}dx...\int\limits_{a}^{x}}_{\text{m+1 integrals} }\varphi(t)(x-t)^{ \alpha -m -1}dt=
\frac{1}{\Gamma( \alpha)}\int\limits_{a}^{x}(x-t)^{\alpha-1}\varphi(t)dt ;
$$
$$
 \frac{1}{\Gamma( \alpha-m)}\underbrace{\int\limits_{x}^{b}dx\int\limits_{x}^{b}dx...\int\limits_{x}^{b}}_{\text{m+1 integrals} }\varphi(t)(t-x )^{ \alpha -m -1}dt=
\frac{1}{\Gamma( \alpha)}\int\limits_{x}^{b}(t-x)^{\alpha-1}\varphi(t)dt,\;
m=\begin{cases}[\alpha],\, \alpha \in \mathbb{R}^+ \setminus \mathbb{N},\\
[\alpha]-1,\;\;\;\alpha \in  \mathbb{N}.
\end{cases}
\end{equation}
Suppose  $f(x)\in AC^{n}(\bar{I}), \,n\in \mathbb{N};$ then using the previous formulas  we have  the  representations
$$f(x)=\frac{1}{(n-1)!}\int\limits_{a}^{x}(x-t)^{n-1}f^{(n)}(t)dt+\sum\limits_{k=0}^{n-1} \frac{f^{(k)}(a)}{k!} (x-a)^{k};
$$
$$f(x)=\frac{(-1)^{n} }{(n-1)!}\int\limits_{x}^{b}(t-x)^{n-1}f^{(n)}(t)dt+\sum\limits_{k=0}^{n-1}(-1)^{k} \frac{f^{(k)}(b)}{k!} (b-x)^{k}.
$$
 Now assume that $ n=[  \alpha]+1$ in the previous formulas, then due to   Theorem 2.5  \cite[p.46]{firstab_lit:samko1987} and   formulas  of the   fractional integral of a power function   (2.44),(2.45)  \cite[p.40]{firstab_lit:samko1987},
 we have  in the left-side case
\begin{equation}\label{2}
(D_{a+}^{\alpha}f)(x)=
\sum\limits_{k=0}^{n-1}\frac{f^{(k)}(a)}{\Gamma(k+1-\alpha)}(x-a)^{k-\alpha}+\frac{1}{\Gamma(n-\alpha)}\int\limits_{a}^{x}\frac{f^{(n)}(t)}{(x-t)^{\alpha-n+1}}dt,
\end{equation}
 in the right-side case
\begin{equation}\label{3}
(D^{\alpha}_{b- }f)(x)=\sum\limits_{k=0}^{n-1}(-1)^{ k}\frac{f^{(k)}(b)}{ \Gamma( k+1-\alpha )}  (b-x)^{ k-\alpha}+\frac{(-1)^{n}}{\Gamma(n-\alpha)}\int\limits_{x}^{b}\frac{f^{(n)}(t)}{(t-x)^{\alpha-n+1}}dt.
\end{equation}

\subsection{Riemann-Liouville  operator via the Jacobi polynomials}

The orthonormal system of the  Jacobi  polynomials is denoted by
$$
p_{n}^{\,(\beta,\gamma)}(x)= \delta_{n} (\beta,\gamma) \, y^{\,(\beta,\gamma)}_{n}(x),\,n\in \mathbb{N}_{0},
$$
where  the  normalized multiplier $\delta_{n} (\beta,\gamma)$ is  defined by the formula
$$
\delta_{n}  (\beta,\gamma) =(-1)^{n}\frac{\sqrt{\beta+\gamma+2n+1}}{(b-a)^{n+(\beta+\gamma+1)/2}}\cdot \sqrt{\frac{ \Gamma(\beta+\gamma+n+1)}{n!\Gamma(\beta +n+1)\Gamma( \gamma+n+1)}} \;,\,
$$
$$
\delta_{0} (\beta,\gamma) =  \frac{1}{\sqrt{\Gamma(\beta  +1)\Gamma( \gamma +1)}}\,,\;\beta+\gamma+1=0,
$$
the orthogonal polynomials $y^{( \beta,\gamma)}_{n}$ are defined by the formula
$$
y^{( \beta,\gamma)}_{n}(x) =   (x-a)^{-\beta}(b-x)^{-\gamma}\frac{d^{n}}{dx^{n}}\left[(x-a)^{\beta+n}(b-x)^{\gamma+n}\right],\, \beta,\gamma>-1.
$$
For convenience, we use the following  functions
$$ \varphi^{(\beta,\gamma)} _{n}(x)=(x-a)^{ n+\beta} (b-x)^{ n+\gamma}.
$$
If   misunderstanding does not appear,     we  will  use  the shorthand denotations  in   various parts of this work
$$
p_{n}^{\,(\beta,\gamma)}(x):=p_{n} (x),\,y^{(\beta,\gamma)}_{n}(x):=y_{n}(x),\,\varphi^{(\beta,\gamma)} _{n}(x):=\varphi _{n}(x),\,\delta_{n}(\beta,\gamma):=\delta_{n}.
$$
In such  cases  we would like reader see carefully the denotations corresponding to a concrete paragraph.
  Specifically, in the case of the Jacobi  polynomials,   when $ \beta=\gamma=0,$ we have the  Legendre  polynomials.
If we consider the Hilbert space $L_{2}(I),$ then the  Legendre orthonormal system has a basis property due to
the general property of complete  orthonormal systems in Hilbert  spaces, but the question on the basis property of the Legendre  system for an  arbitrary $p\geq1,\,p\neq2$ had been  still relevant until    half of the last century. In the direction of   solving this problem the following works are  known  \cite{firstab_lit:H. Newman},\cite{firstab_lit:H. Pollard 1},\cite{firstab_lit:H. Pollard 2},\cite{firstab_lit:H. Pollard 3}.  In particular, in the paper \cite{firstab_lit:H. Pollard 1} Pollard H.   proved that the Legendre system has a basis property in the case $4/3<p<4 $ and for  the values of $p\in [1,4/3]\cup[4,\infty)$ the
Legendre  system does not have  a  basis property in $L_{p}(I)$ space. The cases $p=4/3,p=4$ were considered by Newman J. and  Rudin W. in the  paper \cite{firstab_lit:H. Newman} where it   is proved that in these cases the Legendre  system also does not have  a basis property in $L_{p}(I)$ space.
It is worth noting  that   the criterion of a basis property for the Jacobi polynomials    was  proved by Pollard H.  in the work \cite{firstab_lit:H. Pollard 3}. In that paper   Pollard H. formulated  the theorem proposing   that the Jacobi polynomials have a basic property in the space $L_{p}(I_{0},\omega),\,I_{0}:=(-1,1),\;\beta,\gamma\geq -1/2,\,M(\beta,\gamma)<p<m(\beta,\gamma)$  and do not have a basis property, when $ p<M(\beta,\gamma)$ or $ p>m(\beta,\gamma),$ where
$$
m(\beta,\gamma)=4\min\left\{\frac{\beta+1}{2\beta+1},\frac{\gamma+1}{2\gamma+1}\right\},\;
M(\beta,\gamma)=4\max\left\{\frac{\beta+1}{2\beta+3},\frac{\gamma+1}{2\gamma+3}\right\}.
$$
However, this result was subsequently improved by Muckenhoupt B.  in the paper \cite{firstab_lit:Muckenhoupt}.
Note that  the  linear  transform
\begin{equation*}
l:[-1,1] \rightarrow [a,b],\;y = \frac{b-a}{2}\, x+\frac{b+a}{2}
\end{equation*}
shows us that    all results  of the orthonormal polynomials theory   obtained for the segment $[-1,1]$ are true  for  the segment $[a,b]\subset \mathbb{R}.$  We use the  denotation $S_{k}f:=\sum_{n=0}^{k}f_{n}p^{(\beta,\gamma)}_{n},\,k\in \mathbb{N}_{0},$ where $f_{n}$ are the  Jacobi series  coefficients of  the function $f.$
Consider the orthonormal    Jacobi  polynomials
$$
p^{(\beta,\gamma)}_{n} (x)= \delta_{n }  y_{n} (x)=\delta_{n } (x-a)^{-\beta} (b-x)^{-\gamma} \varphi^{(n)}_{n}(x),\,\beta,\gamma>-1/2,\,n\in \mathbb{N}_{0}.
$$
Further, we need some   formulas. Using the Leibnitz formula, we get
 \begin{equation}\label{4}
y_{n} (x)=\sum\limits_{i=0}^{n}(-1)^{i}C^{i}_{n}  \tbinom{n+\beta} {n-i}   (x-a)^{  i}\tbinom{n+\gamma} {i}   (b-x)^{ n -i}=
 \sum\limits_{i=0}^{n}(-1)^{n+i}C^{i}_{n}  \tbinom{n+\beta}i    (x-a)^{n -i }\tbinom{n+\gamma} {n-i}  (b-x)^{  i} .
\end{equation}
Using again  the Leibnitz formula,  we obtain
\begin{equation}\label{5}
y_{n}^{(k)}(x) =
 \sum\limits_{i=0}^{n}(-1)^{i}C^{i}_{n}  \tbinom{n+\beta} {n-i}   \tbinom{n+\gamma} {i}\sum\limits_{j=c}^{i} (-1)^{k+j} C^{j}_{k}\tbinom{ i}{ j}  (x-a)^{i-j}   \tbinom{n-i}{k-j} (b-x)^{n+j -i-k},\,
 \end{equation}
 where
 $
 c= \max\left\{0\,,k+i-n\right\},\;k\leq n\,.
 $
In accordance with \eqref{5}, we have
\begin{equation}\label{6}
y_{n}^{(k)}(a)=(-1)^{k}(b-a)^{  n -k  }\sum\limits_{i=0}^{n}C^{i}_{n}  \tbinom{n+\beta} {n-i}   \tbinom{n+\gamma} {i}  C^{i}_{k}    \tbinom{n-i}{k-i}  i!,\,k\leq n;
\end{equation}
$$
\!\!p_{n}^{(k)}(a) \! =\!\!\frac{(-1)^{n+k}\sqrt{\beta+\gamma+2n+1}}{(b-a)^{k+(\beta+\gamma+1)/2}} \cdot \sqrt{\frac{ \Gamma(\beta+\gamma+n+1)}{n!\Gamma(\beta +n+1)\Gamma( \gamma+n+1)}} \sum\limits_{i=0}^{n}C^{i}_{n}  \tbinom{n+\beta} {n-i}   \tbinom{n+\gamma} {i}  C^{i}_{k}    \tbinom{n-i}{k-i}  i!,\,k\leq n.
$$
 In the same way, we get
\begin{equation}\label{7}
y_{n}^{(k)}(x)
 =\sum\limits_{i=0}^{n}(-1)^{n+i}C^{i}_{n}  \tbinom{n+\beta}{i} \tbinom{n+\gamma} {n-i}\sum\limits_{j=c}^{i}(-1)^{i} C^{j}_{k} \tbinom{n-i}{k-j}  (x-a)^{n+j -i-k } \tbinom{ i}{ j}  (b-x)^{  i-j},\,k\leq n.
\end{equation}
Hence
\begin{equation}\label{8}
y_{n}^{(k)}(b)=(-1)^{n}(b-a)^{n -k }\sum\limits_{i=0}^{n}C^{i}_{n}  \tbinom{n+\beta}{i} \tbinom{n+\gamma} {n-i}  C^{i}_{k} \tbinom{n-i}{k-i}   i!,\,k\leq n;
\end{equation}
$$
p_{n}^{(k)}(b)  =\frac{ \sqrt{\beta+\gamma+2n+1}}{n!(b-a)^{k+(\beta+\gamma+1)/2}}\cdot \sqrt{\frac{n!\Gamma(\beta+\gamma+n+1)}{\Gamma(\beta +n+1)\Gamma( \gamma+n+1)}} \sum\limits_{i=0}^{n}C^{i}_{n}  \tbinom{n+\beta} {n-i}   \tbinom{n+\gamma} {i}  C^{i}_{k}    \tbinom{n-i}{k-i}  i!,\,k\leq n.
$$
Let    $\mathfrak{C}_{n}^{(k)}(\beta,\gamma):=(-1)^{n+k}p_{n}^{(k)}(a)(b-a)^{k},$ then $p_{n}^{(k)}(b)(b-a)^{k}=\mathfrak{C}_{n}^{(k)}(\gamma,\beta).$
  Using    the Taylor  series expansion  for the  Jacobi   polynomials, we get
$$
p^{(\beta,\gamma)}_{n}(x)= \sum\limits_{k=0}^{n}(-1)^{n+k}(b-a)^{-k}  \frac{\mathfrak{C}_{n}^{(k)}(\beta,\gamma)}{ k!}  (x-a)^{k}=    \sum\limits_{k=0}^{n} (-1)^{ k}   (b-a)^{-k}\frac{\mathfrak{C}_{n}^{(k)}(\gamma,\beta)}{ k!}   (b-x)^{k} .
$$
Applying the formulas (2.44),(2.45) of  the   fractional integral and   derivative  of a power  function  \cite[p.40]{firstab_lit:samko1987},  we obtain
$$
(I_{a+}^{\alpha}p_{n})(x)= \sum\limits_{k=0}^{n} (-1)^{n+k}(b-a)^{-k}   \frac{  \mathfrak{C}_{n}^{(k)}(\beta,\gamma) }{\Gamma(k+1+\alpha)}  (x-a)^{k+\alpha},
$$
$$
\;(I_{b-}^{\alpha}p_{n})(x)=
  \sum\limits_{k=0}^{n} (-1)^{ k} (b-a)^{-k} \frac{  \mathfrak{C}_{n}^{(k)}(\gamma,\beta) }{\Gamma(k+1+\alpha)}  (b-x)^{k+\alpha},\,\alpha\in(-1,1),
$$
here we used the formal denotation $I^{-\alpha}_{a+}:=D_{a+}^{\alpha}.$
 Thus, using integration by parts, we get
$$
\int\limits_{a}^{b}p_{m}(x)(I^{\alpha}_{a+}p_{n})(x)\omega(x)dx=\delta_{m}\int\limits_{a}^{b}\varphi^{(m)}_{m}(x)(I^{\alpha}_{a+}p_{n})(x) dx=
$$
$$
=-\delta_{m}\int\limits_{a}^{b}\varphi^{(m-1)}_{m}(x)(I^{\alpha}_{a+}p_{n})^{(1)}(x) dx=
 (-1)^{m}\delta_{m}\int\limits_{a}^{b}\varphi_{m}(x)(I^{\alpha}_{a+}p_{n})^{(m)}(x) dx=
 $$
$$
= (-1)^{m}\delta_{m}\int\limits_{a}^{b} \sum\limits_{k=0}^{n}(-1)^{n+k}   (b-a)^{-k} \frac{  \mathfrak{C}_{n}^{(k)}(\beta,\gamma) }{\Gamma(k+\alpha-m+1)}  (x-a)^{k+\alpha+\beta } (b-x)^{m+\gamma} dx=
$$
$$
      =(-1)^{n }\hat{\delta}_{m}\sum\limits_{k=0}^{n} (-1)^{ k}   \frac{  \mathfrak{C}_{n}^{(k)}(\beta,\gamma)B(\alpha+\beta+k+1,\gamma+m+1) }{\Gamma(k+\alpha-m+1)},
$$
where
$$
\hat{\delta}_{m}=(b-a)^{\alpha+(\beta +\gamma+1)/2}\sqrt{\frac{(\beta+\gamma+2m+1) \Gamma(\beta+\gamma+m+1)}{m! \Gamma(\beta +m+1)\Gamma( \gamma+m+1)}}.
$$
 In the same way, we get
$$
(p_{m},I_{b-}^{\alpha}p_{n})_{L_{2}(I\!,\,\omega)}=(-1)^{ m}\hat{\delta}_{m}\sum\limits_{k=0}^{n}  (-1)^{ k}  \frac{  \mathfrak{C}_{n}^{(k)}(\gamma,\beta)B(\alpha+\gamma+k+1,\beta+m+1) }{\Gamma(k+\alpha-m+1)}.
$$
Using  the denotation
$$
A^{\alpha,\beta,\gamma}_{mn}:= \hat{\delta}_{m}\sum\limits_{k=0}^{n}  (-1)^{ k}  \frac{  \mathfrak{C}_{n}^{(k)}(\beta,\gamma)B(\alpha+\beta+k+1,\gamma+m+1) }{\Gamma(k+\alpha-m+1)}\,,
$$
 we have
\begin{equation}\label{9}
 (p_{m},I_{a+}^{\alpha}p_{n})_{L_{2}(I\!,\,\omega)}=(-1)^{n}A^{\alpha,\beta,\gamma}_{mn},\;\;
 (p_{m},I_{b-}^{\alpha}p_{n})_{L_{2}(I\!,\,\omega)}=(-1)^{m}A^{\alpha,\gamma,\beta}_{mn}.
 \end{equation}
 We claim the following formulas without any  proof
   because of the    absolute  analogy    with the proof corresponding to the fractional integral operators
\begin{equation}\label{10}
 (p_{m},D_{a+}^{\alpha}p_{n})_{L_{2}(I\!,\,\omega)}=(-1)^{n}A^{-\alpha,\beta,\gamma}_{mn},\;\;
 (p_{m},D_{b-}^{\alpha}p_{n})_{L_{2}(I\!,\,\omega)}=(-1)^{m}A^{-\alpha,\gamma,\beta}_{mn}.
 \end{equation}
  Further, we  use the following denotations
\begin{equation}\label{11}
   A_{+}^{\alpha,\beta,\gamma}:=
\begin{pmatrix}A^{\alpha,\beta,\gamma}_{00}&-A^{\alpha,\beta,\gamma}_{01}&...\\
 A^{\alpha,\beta,\gamma}_{10}&-A^{\alpha,\beta,\gamma}_{11}&...\\
\cdot\\
\cdot\\ \cdot&&...
\end{pmatrix},\;
\; A_{-}^{\alpha,\gamma,\beta}:=
  \begin{pmatrix}A^{\alpha,\gamma,\beta}_{00}&A^{\alpha,\gamma,\beta}_{01}&...\\
 -A^{\alpha,\gamma,\beta}_{10}&-A^{\alpha,\gamma,\beta}_{11}&...\\
\cdot\\
\cdot\\ \cdot&&...
\end{pmatrix},\,\alpha\in \mathbb{R}.
\end{equation}
 This  allows us to consider  the  integro-differential operators in the matrix form of notation.

Throughout this paper the  results are formulated and proved for the left-side case.   One may reformulate them for the right-side case with no difficulty.

\section{Main results}
\subsection{Mapping theorems}
The following lemma aims to establish more simplified  and at the same time applicable   form of the results proven  in   Theorem   3.10 \cite[p.78]{firstab_lit:samko1987}, Theorem 3.12 \cite[p.81]{firstab_lit:samko1987}  and is devoted to the description of the operator $I_{a+}^{\alpha}$ action   in the space $L_{p}(I,\omega).$   More precisely, these theorems describe the action $I^{\alpha}_{a+}:L_{p}(I,\omega)\rightarrow L_{q}(I,r)$ with rather inconveniently formulated conditions, from  the point of view of operator theory, regarding to the weighted functions and indexes $p,q.$  To justify this claim, we can easily see that there are some cases in the theorems conditions  for which  the    bounded  action     $I^{\alpha}_{a+}:L_{p}(I,\omega)\rightarrow L_{p}(I,\omega),\,\alpha\in(0,1),\,\omega(x)=(x-a)^{\beta}(b-x)^{\gamma}$ does not follow easily   from the theorems,   for instance in the case   $2<p<1/(1-\alpha),  \,\beta\in \mathbb{R}, \,0<\gamma\leq \alpha p-1,$ the mentioned above bounded action of $I_{a+}^{\alpha}$ cannot be obtained by using the theorems   and   estimating, as we shall see further the proof of this fact requires to involve the weak topology methods.

\begin{lem}\label{L1}
  Suppose $ \omega(x)=(x-a)^{\beta}(b-x)^{\gamma},\;   \beta, \gamma\in \left[- 1/2,1/2\right],\,M(\beta,\gamma)<p<m(\beta,\gamma);$ then
  \begin{equation}\label{14.1}
  \|I_{a+}^{\alpha}f\|_{L_{p}(I\!,\,\omega)}\leq C\|f\|_{L_{p}(I\!,\,\omega)},\,f\in L_{p}(I,\omega),\,\alpha\in(0,1).
\end{equation}

 \end{lem}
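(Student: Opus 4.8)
The plan is to prove \eqref{14.1} first for $f$ in a dense subclass and then pass to the limit. Two preliminary remarks organize the reduction. Because $\beta,\gamma\in[-1/2,1/2]$ and $p>M(\beta,\gamma)$ one has $\beta,\gamma<p-1$ (indeed $M(\beta,\gamma)\ge 1+\max\{\beta,\gamma\}$ on $[-1/2,1/2]^{2}$), so $\omega^{-p'/p}\in L_{1}(I)$; hence $L_{p}(I,\omega)\hookrightarrow L_{1}(I)$, the integral $I_{a+}^{\alpha}f$ is well defined for $f\in L_{p}(I,\omega)$, and $I_{a+}^{\alpha}$ is bounded on $L_{1}(I)$ by Schur's test (on the finite interval the iterated integrals of $|x-t|^{\alpha-1}$ are $\le(b-a)^{\alpha}/\alpha$). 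By the basis property of the Jacobi system in $L_{p}(I,\omega)$ recalled in Section~2, polynomials are dense in $L_{p}(I,\omega)$; so it suffices to establish \eqref{14.1} with a constant independent of $f\in C^{\infty}(\bar I)$ and then extend by density, the $L_{1}$ bound guaranteeing that the bounded extension coincides with $I_{a+}^{\alpha}$.

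For the estimate I would localise. Fix $a<c<d<b$ and split $f=f_{1}+f_{2}+f_{3}$ with supports in $(a,c)$, $[c,d]$, $(d,b)$. On $[c,d]$ the weight $\omega$ is bounded above and below, so the $f_{2}$-part is controlled by the unweighted boundedness of $I_{a+}^{\alpha}$ on $L_{p}(I)$ (Schur again). Writing $(I_{a+}^{\alpha}f_{1})(x)=\Gamma(\alpha)^{-1}\int_{0}^{x-a}s^{\alpha-1}f_{1}(x-s)\,ds$ and applying Minkowski's integral inequality reduces the $f_{1}$-part to a weighted Hardy inequality near $a$, which is available exactly because $\beta<p-1$. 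The crux is the $f_{3}$-part: $I_{a+}^{\alpha}f_{3}$ vanishes on $(a,d)$ and equals $I_{d+}^{\alpha}f_{3}$ on $(d,b)$, so one must bound $I_{d+}^{\alpha}$ on $L_{p}((d,b),(b-x)^{\gamma})$ with $\gamma\in[-1/2,1/2]$. When $\alpha p>1$ this follows from a plain H\"older estimate, because $(x-\cdot)^{\alpha-1}\in L_{p'}$ and, for $t<x$, $(b-t)^{-\gamma p'/p}\le(b-x)^{-\gamma p'/p}$ when $\gamma\ge0$ (the case $\gamma\le0$ being easier). In the remaining case $\alpha p\le1$ one invokes Theorems~3.10 and 3.12 of \cite{firstab_lit:samko1987}, whose hypotheses, specialised to $q=p$, $r=\omega$, $\gamma\in[-1/2,1/2]$, reduce to $-1<\gamma<p-1$ and hence hold.

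I expect the real obstacle to be this last configuration, which is precisely the one flagged before the statement as not obtainable ``by using the theorems and estimating''. I would resolve it by one of two routes. The first is duality: by fractional integration by parts (legitimate after approximating $f$ and the test function by bounded functions and using the $L_{1}$-continuity of $I_{a+}^{\alpha}$ and $I_{b-}^{\alpha}$), $\|I_{a+}^{\alpha}f\|_{L_{p}(I,\omega)}=\sup\{\,|\int_{a}^{b}f\cdot I_{b-}^{\alpha}\psi\,dx|:\ \|\psi\|_{L_{p'}(I,\omega^{1-p'})}\le1\,\}$, so \eqref{14.1} is equivalent to the boundedness of $I_{b-}^{\alpha}$ on $L_{p'}(I,\omega^{1-p'})$; the exponents $\beta(1-p'),\gamma(1-p')$ lie in $(-1/2,1/2)$, and if $p<1/(1-\alpha)$ then $\alpha p'>1$, so after the reflection $x\mapsto a+b-x$ the auxiliary problem falls under the H\"older estimate above. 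The second, in the spirit emphasised in the introduction, uses the weak topology directly: Samko's theorems give a bounded map $I_{a+}^{\alpha}:L_{p}(I,\omega)\to L_{p}(I,r)$ with $r\le C\omega$, so the family $\{I_{a+}^{\alpha}(f\mathbf{1}_{(a,b-\varepsilon)})\}_{\varepsilon>0}$ is bounded in the reflexive space $L_{p}(I,\omega)$, its weak limit is forced to equal $I_{a+}^{\alpha}f$ (since $f\mathbf{1}_{(a,b-\varepsilon)}\to f$ in $L_{1}(I)$ and $I_{a+}^{\alpha}$ is $L_{1}$-continuous), and weak lower semicontinuity of the norm yields \eqref{14.1}. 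The point demanding the most care in either route is to verify that the auxiliary (dual or truncated) problem genuinely lies in the regular range — that the exponent shift creates no new endpoint degeneracy.
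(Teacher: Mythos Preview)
Your localization into pieces supported near $a$, in the interior, and near $b$ is a genuinely different organization than the paper's, which proceeds by a case split on the sign of $\alpha p-1$ and the position of $\gamma$ relative to $0$ and $\alpha p-1$, invoking Theorems 3.8, 3.10, 3.12 of \cite{firstab_lit:samko1987} in each subcase. The paper's hard case is $p>2$, $\alpha p>1$, $0<\gamma\le\alpha p-1$; there it argues by duality through the Jacobi basis --- establishing $\|\omega^{-1}I^\alpha_{b-}\varphi^{(m)}_m\|_{L_{p'}(I,\omega)}\le C\|p_m\|_{L_{p'}(I,\omega)}$, hence $|(I^\alpha_{a+}f,p_m)|\le C\|f\|_{L_p(I,\omega)}\|p_m\|_{L_{p'}(I,\omega)}$, extending to all $g\in L_{p'}(I,\omega)$ by the basis property, and concluding via weak boundedness. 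Your pointwise H\"older bound on $f_3$ actually covers this hard case already: when $\gamma\ge0$ one has $(b-t)^{-\gamma p'/p}\le(b-x)^{-\gamma p'/p}$ for $t<x$, and when $\gamma<0$ the factor is bounded by $(b-d)^{-\gamma p'/p}$; either way $|(I^\alpha_{d+}f_3)(x)|(b-x)^{\gamma/p}\le C(x-d)^{\alpha-1/p}\|f_3\|$, and the $p$-th power integrates since $\gamma>-1$. So you have found a route more elementary than the paper's for the case it singles out as delicate, and your two ``backup'' routes are not needed.

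That said, both backups as written contain errors. The dual exponents $\beta(1-p'),\gamma(1-p')$ are \emph{not} in $(-1/2,1/2)$: for $\beta=\gamma=1/2$ one has $p'\in(3/2,3)$ and $\beta(1-p')\in(-1,-1/4)$; the duality still goes through, but you must redo the endpoint analysis for the new exponents rather than re-enter the lemma. The truncation route is worse: a bound $I^\alpha_{a+}:L_p(I,\omega)\to L_p(I,r)$ with $r\le C\omega$ is \emph{weaker} than the target and gives no uniform $L_p(I,\omega)$-control of $I^\alpha_{a+}(f\mathbf{1}_{(a,b-\varepsilon)})$; the paper's weak-topology step is not truncation but the dual pairing bound above. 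Finally, your dismissal of $\alpha p\le1$ via Theorems 3.10/3.12 is too quick: those theorems map into $L_q$ with $q>p$ or into H\"older classes, and one must embed back into $L_p(I,\omega)$, which requires a further split on whether $\gamma>\alpha p-1$ (the paper's cases (i) and (iii) carry this out).
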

\begin{proof}
 By  direct calculation, we can verify that $\beta$ satisfies  the inequality $2t^{2}+t-1<0.$  We see that
 $$
2t^{2}+t-1\leq0; \,2t^{2}+3t\leq2t+1;\,t\leq\frac{2t+1}{2t+3};\,t+1\leq 4\frac{t+1}{2t+3}.
$$
Let us  substitute  $\beta$   for $t,$    we have
$$
\beta+1\leq 4\frac{\beta+1}{2\beta+3}\leq M(\beta,\gamma)<p.
$$
Hence $\beta<p-1.$ We have   absolutely analogous reasoning     for $\gamma$ i.e. $\gamma<p-1.$ Let us consider the various  relations between $p$ and $\alpha.$

\noindent   i)  $ p<1/\alpha.$ If     $\gamma>\alpha p-1,$ then    in accordance with Theorem 3.10 \cite[p.78]{firstab_lit:samko1987}, we get
\begin{equation}\label{14.2}
\|I_{a+}^{\alpha}f\|_{L_{q}(I,r)}\leq C\|f\|_{L_{p}(I\!,\,\omega)},\,q=p/(1-\alpha p),\,r(x)=(x-a)^{\frac{\beta q}{p}}(b-x)^{\frac{\gamma q}{p}}.
\end{equation}
Using the H\"{o}lder inequality, we obtain
$$
\left(\int\limits_{a}^{b}\left|I_{a+}^{\alpha}f\right|^{p}\omega \,dx\right)^{\frac{1}{p}}=
\left(\int\limits_{a}^{b}\left|\omega^{\frac{1}{p}} I_{a+}^{\alpha}f\right|^{p}dx\right)^{\frac{1}{p}}\leq C
\left(\int\limits_{a}^{b}\left| I_{a+}^{\alpha}f\right|^{q}\omega^{\frac{q}{p}} dx\right)^{\frac{1}{q}}
 =C
\left(\int\limits_{a}^{b}\left| I_{a+}^{\alpha}f\right|^{q}r dx\right)^{\frac{1}{q}}.
$$
Combining this inequality with  \eqref{14.2}, we obtain \eqref{14.1}. If
$\gamma\leq\alpha p-1,$ then   we have the following reasoning
$$
\left(\int\limits_{a}^{b}\left|I_{a+}^{\alpha}f\right|^{p}\omega\,dx\right)^{\frac{1}{p}}=
\left(\int\limits_{a}^{b}\left|(x-a)^{\frac{\beta}{p}}I_{a+}^{\alpha}f(x)\right|^{p}(b-x)^{\gamma}\,dx\right)^{\frac{1}{p}}= $$
$$
=\left(\int\limits_{a}^{b}\left|(x-a)^{\frac{\beta}{p}}I_{a+}^{\alpha}f(x)\right|^{p}(b-x)^{\frac{\gamma}{\xi}}
(b-x)^{\frac{\gamma}{\xi'}}\,dx\right)^{\frac{1}{p}}=I_{1},\,\xi=1/(1-\alpha p).
$$
Using the H\"{o}lder inequality, we get
$$
I_{1}=\left(\int\limits_{a}^{b}\left|(x-a)^{\frac{\beta}{p}}(b-x)^{\frac{\gamma }{p\,\xi}}I_{a+}^{\alpha}f(x)\right|^{p}
(b-x)^{\frac{\gamma}{\xi'}}\,dx\right)^{\frac{1}{p}}\leq
$$
$$
\leq \left(\int\limits_{a}^{b}\left|(x-a)^{\frac{\beta}{p}}(b-x)^{\frac{\gamma }{p\,\xi}}I_{a+}^{\alpha}f(x)\right|^{p\xi}
 \,dx\right)^{\frac{1}{p\xi}}\times
 \left(\int\limits_{a}^{b}  (b-x)^{ \gamma  }
 \,dx\right)^{\frac{1}{p\xi'}}=
$$
$$
C\left(\int\limits_{a}^{b}\left|I_{a+}^{\alpha}f(x)\right|^{q}(x-a)^{\frac{\beta q}{p}}(b-x)^{ \gamma  }
 \,dx\right)^{\frac{1}{q}} \leq
 C\left(\int\limits_{a}^{b}\left|I_{a+}^{\alpha}f(x)\right|^{q}(x-a)^{\frac{\beta q}{p}}(b-x)^{ \nu  }
 \,dx\right)^{\frac{1}{q}},\,-1<\nu<\gamma.
$$
Applying Theorem 3.10 \cite[p.78]{firstab_lit:samko1987}, we obtain
$$
\left(\int\limits_{a}^{b}\left|I_{a+}^{\alpha}f(x)\right|^{q}(x-a)^{\frac{\beta q}{p}}(b-x)^{ \nu  }
 \,dx\right)^{\frac{1}{q}}\leq C \|f\|_{L_{p}(I\!,\,\omega)}.
$$
Hence \eqref{14.1} is fulfilled.

\noindent   ii) $1/\alpha<p.$ We have several cases.

\noindent a)
  $\gamma\leq0$ or $\gamma>\alpha p-1.$  If  $\gamma\leq0,$ then  applying Theorem 3.8 \cite[p.74]{firstab_lit:samko1987}, we obtain
$$
\|I^{\alpha}_{a+}f\|_{H_{0}^{\alpha-1/p}(I,r_{1})}\leq C \left(\int\limits_{a}^{b}\left| f(x)\right|^{p } (x-a)^{ \beta } dx\right)^{\frac{1}{p }}\leq C \left(\int\limits_{a}^{b}\left| f(x)\right|^{p }\omega(x) dx\right)^{\frac{1}{p }},
$$
where $r_{1}(x)=(x-a)^{\frac{\beta}{p }}.$
  We have the following estimate
$$
\left(\int\limits_{a}^{b}\left|I^{\alpha}_{a+}f\right|^{p }\omega (x)dx\right)^{\frac{1}{p }}=
\left(\int\limits_{a}^{b}\left|(x-a)^{\frac{\beta}{p }}I^{\alpha}_{a+}f\right|^{p } (b-x)^{ \gamma  }dx\right)^{\frac{1}{p'}}\leq
$$
$$
\leq \|I^{\alpha}_{a+}f\|_{H_{0}^{\alpha-1/p}(I,r_{1})} \left(\int\limits_{a}^{b}  (b-x)^{ \gamma  }dx\right)^{\frac{1}{p }}=C \|I^{\alpha}_{a+}f\|_{H_{0}^{\alpha-1/p}(I,r_{1})}.
$$
 Hence \eqref{14.1} is fulfilled.
If  $\gamma>\alpha p-1,$   then  we get
$$
\left(\int\limits_{a}^{b}\left|I^{\alpha}_{a+}f(x)\right|^{p}\omega(x)dx\right)^{\frac{1}{p}}=
\left(\int\limits_{a}^{b}\left|(x-a)^{\frac{\beta }{p}}(b-x)^{\frac{\gamma }{p }}
I^{\alpha}_{a+}f(x)\right|^{p}   dx\right)^{\frac{1}{p }}
 \leq C\|I^{\alpha}_{a+}f(x)\|_{H_{0}^{\alpha-1/p}(I,r_{1})},
$$
where $r_{1}(x)=(x-a)^{\frac{\beta }{p}}(b-x)^{\frac{\gamma }{p }}.$
  Applying Theorem 3.12 \cite[p.81]{firstab_lit:samko1987}, we obtain
$$
\|I^{\alpha}_{a+}f\|_{H_{0}^{\alpha-1/p }(I,r_{1})}\leq C \left(\int\limits_{a}^{b}\left| f(x)\right|^{p } \omega (x)dx\right)^{\frac{1}{p}}.
$$
Hence \eqref{14.1} is fulfilled.

 \noindent b)   $p\leq2,\,0<\gamma\leq \alpha p-1.$   As a consequence of the condition $p\leq2,$ we get
$\gamma-(\alpha p-1)>-1.$
  We obtain the estimate
$$
\left(\int\limits_{a}^{b}\left|I^{\alpha}_{a+}f\right|^{p}\omega(x)dx\right)^{\frac{1}{p}}=
\left(\int\limits_{a}^{b}
\left|(x-a)^{\frac{\beta}{p}}(b-x)^{ \frac{\theta}{p} }I^{\alpha}_{a+}f(x)\right|^{p}
(b-x)^{\gamma-\theta}   dx\right)^{\frac{1}{p}}\leq
$$
$$
\leq  \|I^{\alpha}_{b-}\varphi_{m} \|_{H_{0}^{\alpha-1/p}(I,r_{1})}\left(\int\limits_{a}^{b}
 (b-x)^{\gamma-\theta}  dx\right)^{\frac{1}{p}}  ,
$$
where $\theta=(\alpha p-1)+p\, \delta,\,\delta>0,\,      r_{1}(x)=(x-a)^{\frac{\beta}{p}}(b-x)^{ \frac{\theta}{p} }.$ Taking into account that   $\gamma-(\alpha p-1)>-1,$    we get for   sufficiently  small   $\delta>0$
$$
\left(\int\limits_{a}^{b}
 (b-x)^{\gamma-\theta}  dx\right)^{\frac{1}{p}} <\infty.
$$
Applying Theorem 3.12 \cite[p.81]{firstab_lit:samko1987}, we obtain
$$
\|I^{\alpha}_{a+}f\|_{H_{0}^{\alpha-1/p}(I,r_{1})}\leq C \left(\int\limits_{a}^{b}\left| f(x)\right|^{p}\omega(x) dx\right)^{\frac{1}{p}}.
$$
Hence \eqref{14.1} is fulfilled.\\
\noindent c) $p>2, \,0<\gamma\leq \alpha p-1.$ In this case    we should    consider various   subcases.

\noindent  1) $ p'>1/\alpha.$
If $\beta\geq0,$ then     we   note  that $\varphi^{(m)}_{m}(x)(b-x)^{-\gamma}\in L_{\infty}(I).$    Hence
\begin{equation}\label{14.3}
 \int\limits_{a}^{b}|\varphi^{(m)}_{m}(x)|^{p'} (b-x)^{\gamma(1-p')} dx <\infty.
\end{equation}
It is easily shown that
$$
\left(\int\limits_{a}^{b}\left|I^{\alpha}_{b-}\varphi^{(m)}_{m}\right|^{p'}\omega^{1-p'}(x)dx\right)^{\frac{1}{p'}}=
\left(\int\limits_{a}^{b}\left|(b-x)^{\frac{\gamma(1-p')}{p'}}I^{\alpha}_{b-}\varphi^{(m)}_{m}(x)\right|^{p'} (x-a)^{ \beta(1-p')  }dx\right)^{\frac{1}{p'}}\leq
$$
$$
\leq \|I^{\alpha}_{b-}\varphi^{(m)}_{m}(x)\|_{H_{0}^{\alpha-1/p'}(I,r_{1})} \left(\int\limits_{a}^{b}  (x-a)^{ \beta(1-p')  }dx\right)^{\frac{1}{p'}},
$$
where $r_{1}(x)=(b-x)^{ \gamma(1-p')/ p' }.$ Solving the quadratic  equality we can verify that
under the assumptions  $0<\beta\leq 1/2,$  we have
 $
 4 (\beta+1)/(2\beta+1)\leq (\beta+1)/\beta.
 $
 Since it can easily be checked that $p'<m(\beta,\gamma)\leq 4 (\beta+1)/(2\beta+1),$
then    $p'<(\beta+1)/\beta$ or $\beta(1-p')>-1.$ Hence
$$
 \left(\int\limits_{a}^{b}\left|I^{\alpha}_{b-}\varphi^{(m)}_{m}\right|^{p'}\omega^{1-p'}(x)dx\right)^{\frac{1}{p'}}\leq C\|I^{\alpha}_{b-}\varphi^{(m)}_{m}(x)\|_{H_{0}^{\alpha-1/p'}(I,r_{1})}.
$$
 It is obvious  that $\gamma(1-p')<p'-1.$  Combining  relation \eqref{14.3} and  Theorem 3.8 \cite[p.74]{firstab_lit:samko1987}, we obtain
$$
\|I^{\alpha}_{b-}\varphi^{(m)}_{m}(x)\|_{H_{0}^{\alpha-1/p'}(I,r_{1})}\leq C \left(\int\limits_{a}^{b}\left| \varphi^{(m)}_{m}(x)\right|^{p'} (b-x)^{ \gamma(1-p') }dx\right)^{\frac{1}{p'}}<\infty.
$$
  Since $\beta(1-p')\leq0,$ then
$$
\left(\int\limits_{a}^{b}\left| \varphi^{(m)}_{m}(x)\right|^{p'} (b-x)^{ \gamma(1-p') }dx\right)^{\frac{1}{p'}}\leq
(b-a)^{ \beta( p'-1)  }\left(\int\limits_{a}^{b}\left| \varphi^{(m)}_{m}(x)\right|^{p'}(x-a)^{ \beta(1-p')  } (b-x)^{ \gamma(1-p') }dx\right)^{\frac{1}{p'}}=
$$
$$
=(b-a)^{ \beta( p'-1)  }\left(\int\limits_{a}^{b}\left| p_{m}(x)\right|^{p'}(x-a)^{ \beta   } (b-x)^{ \gamma  }dx\right)^{\frac{1}{p'}}.
$$
Taking into account  the  above considerations, we obtain
\begin{equation}\label{14.4}
\left(\int\limits_{a}^{b}\left|I^{\alpha}_{b-}\varphi^{(m)}_{m}\right|^{p'}\omega^{1-p'}(x)dx\right)^{\frac{1}{p'}}\leq C \|p_{m}\|_{L_{p'}(I,\omega)},\,m\in \mathbb{N}_{0}.
\end{equation}
Thus, we get  $\omega^{-1}I^{\alpha}_{b-}\varphi^{(m)}_{m}\in L_{p'}(I,\omega).$
Using the H\"{o}lder inequality and  the previous reasoning, we get
$$
I_{2}= \left|\int\limits_{a}^{b}f(x)
dx\int\limits_{x}^{b}  \varphi^{(m)}_{m}(t)(t-x)^{\alpha-1} dt\right|=\left|\int\limits_{a}^{b}
\left\{\omega^{-1}(x)\int\limits_{x}^{b} \varphi^{(m)}_{m}(t)(t-x)^{\alpha-1} dt\right\}  f(x)  \,\omega(x)dx\right|\leq
$$
$$
\leq \left\{\int\limits_{a}^{b}|f(x)|^{p}\omega(x)dx\right\}^{1/p}\!\!
 \, \left\{\int\limits_{a}^{b}\left|\omega^{-1}(x)\int\limits_{x}^{b} \varphi^{(m)}_{m}(t)(t-x)^{\alpha-1} dt \right|^{p'} \omega(x) dx \right\}^{1/p'}\!\!\!\leq
$$
$$
\leq C\|f\|_{L_{p}(I\!,\,\omega)}\,\|p_{m}\|_{L_{p'}(I\!,\,\omega)}<\infty,\, f\in L_{p}(I,\omega),\;m\in \mathbb{N}_{0}.
$$
Hence in accordance with   the consequence  of the Fubini theorem, we get
 \begin{equation}\label{14.5}
  \left(  I^{\alpha}_{a+}f ,p_{m}    \right)_{L_{2}(I\!,\,\omega)}=\left( f ,\omega^{-1}I^{\alpha}_{b-}\varphi^{(m)}_{m} \right)_{L_{2}(I\!,\,\omega)},\;m\in \mathbb{N}_{0} .
 \end{equation}
Consider the functional
$$
l_{f}(p_{m})=\left(  I^{\alpha}_{a+}f ,p_{m}    \right)_{L_{2}(I\!,\,\omega)} =\left( f ,\omega^{-1}I^{\alpha}_{b-}\varphi^{(m)}_{m} \right)_{L_{2}(I\!,\,\omega)}.
$$
Applying  \eqref{14.4}, we obtain
 \begin{equation}\label{14.6}
|l_{f}(p_{m})|\leq C \|f\|_{L_{p}(I\!,\,\omega)} \|p_{m}\|_{L_{p'}(I\!,\,\omega)},\,m\in \mathbb{N}_{0}.
\end{equation}
We see that    the  previous inequality is true for all linear combinations
 \begin{equation}\label{14.7}
|l_{f}(\mathcal{L}_{m})|\leq C\|f\|_{L_{p}(I\!,\,\omega)} \|\mathcal{L}_{m}\|_{L_{p'}(I\!,\,\omega)},\, \mathcal{L}_{m}:=\sum\limits_{n=0}^{m}c_{n}p_{n},\,c_{n}={\rm const},\,m\in \mathbb{N}_{0}.
\end{equation}
Since it can easily be checked that $M(\beta,\gamma)< p'< m(\beta,\gamma),$  then in accordance with the results of the paper  \cite{firstab_lit:H. Pollard 3} the system $\{p_{m}\}_{0}^{\infty}$ has a basis property in the space $L_{p'}(I\!,\,\omega).$ Using this fact,   we   pass  to the limit in both sides of   inequality   \eqref{14.7}, thus  we get
\begin{equation}\label{14.8}
|l_{f}(g)|\leq C\|f\|_{L_{p}(I\!,\,\omega)} \|g\|_{L_{p'}(I\!,\,\omega)},\, \forall g\in L_{p'(I\!,\,\omega)}.
\end{equation}
 In the terms of the given above denotation we can write
$$
|\left(  I^{\alpha}_{a+}f ,g    \right)_{L_{2}(I\!,\,\omega)}|\leq C\|f\|_{L_{p}(I\!,\,\omega)} \|g\|_{L_{p'}(I\!,\,\omega)},\, \forall g\in L_{p'(I\!,\,\omega)}.
$$
In its turn, this inequality can be   rewritten  in the following form
$$
\left|\left(  \frac{\!\!I^{\alpha}_{a+}f}{\,\|f\|_{L_{p}(I\!,\,\omega)}}\, ,g \right)_{ \!\!L_{2}(I\!,\,\omega) }\right|\leq C \|g\|_{L_{p'}(I\!,\,\omega)},\, \forall g\in L_{p'(I\!,\,\omega)}.
$$
Hence  the set
$$
\mathcal{F}:=\left\{\frac{\!\!I^{\alpha}_{a+}f}{\,\|f\|_{L_{p}(I\!,\,\omega)}}\,,\,f\in L_{p}(I,\omega) \right\}
$$
is   weekly bounded. Therefore,  in accordance with the  well-known theorem this set is bounded with respect to the norm  $L_{p}(I,\omega).$ It implies that \eqref{14.1} holds.
If $\beta<0,$ then it is easy to show  that  $\beta(1-p')- \alpha p'+1>-1.$
Under the assumptions  $\beta(p'-1)\leq\alpha p'-1,$     we have
$$
\left(\int\limits_{a}^{b}\left|I^{\alpha}_{b-}\varphi_{m}\right|^{p'}\omega^{1-p'}(x)dx\right)^{\frac{1}{p'}}=
\left(\int\limits_{a}^{b}
\left|(x-a)^{ \frac{\theta}{p'} }(b-x)^{\frac{\gamma(1-p')}{p'}}I^{\alpha}_{b-}\varphi_{m}(x)\right|^{p'}
(x-a)^{\beta(1-p')-\theta}   dx\right)^{\frac{1}{p'}}\leq
$$
$$
\leq  \|I^{\alpha}_{b-}\varphi_{m}(x)\|_{H_{0}^{\alpha-1/p'}(I,r_{1})}\left(\int\limits_{a}^{b}
 (x-a)^{\beta(1-p')-\theta}   dx\right)^{\frac{1}{p'}}  ,
$$
where $\theta=(\alpha p'-1)+p'\delta,\,\delta>0,\,      r_{1}(x)=(x-a)^{ \theta/  p'}  (b-x)^{ \gamma(1-p')/ p' }.$ Hence  using  the condition $\beta(1-p')-\theta>-1$,  we get
for sufficiently  small $\delta>0$
$$
\left(\int\limits_{a}^{b}\left|I^{\alpha}_{b-}\varphi_{m}\right|^{p'}\omega^{1-p'}(x)dx\right)^{\frac{1}{p'}}\leq C
\|I^{\alpha}_{b-}\varphi_{m}(x)\|_{H_{0}^{\alpha-1/p'}(I,r_{1})}.
$$
On the other hand, under the  assumptions  $\beta(1-p')>\alpha p'-1,$ we can evaluate directly
$$
\left(\int\limits_{a}^{b}\left|I^{\alpha}_{b-}\varphi_{m}\right|^{p'}\omega^{1-p'}(x)dx\right)^{\frac{1}{p'}}=
\left(\int\limits_{a}^{b}
\left|(x-a)^{\frac{\beta(1-p')}{p'}}(b-x)^{\frac{\gamma(1-p')}{p'}}I^{\alpha}_{b-}\varphi_{m}(x)\right|^{p'}
   dx\right)^{\frac{1}{p'}}\leq
$$
$$
\leq  C\|I^{\alpha}_{b-}\varphi_{m}(x)\|_{H_{0}^{\alpha-1/p'}(I,r_{1})},
$$
where $r_{1}(x)=(x-a)^{\frac{\beta(1-p')}{p'}}(b-x)^{\frac{\gamma(1-p')}{p'}}.$
Applying Theorem 3.12 \cite[p.81]{firstab_lit:samko1987}, we obtain
$$
\|I^{\alpha}_{b-}\varphi_{m}(x)\|_{H_{0}^{\alpha-1/p'}(I,r_{1})}\leq C \left(\int\limits_{a}^{b}\left| \varphi_{m}(x)\right|^{p'} \omega^{1-p'}(x)dx\right)^{\frac{1}{p'}}=\left(\int\limits_{a}^{b}\left| p_{m}(x)\right|^{p'}\omega(x) dx\right)^{\frac{1}{p'}}.
$$
Hence  inequality  \eqref{14.4} holds. Arguing as above, we obtain   \eqref{14.1}.\\
\noindent 2) $ p'<1/\alpha.$
   We should apply  the  reasoning  used in  (i),   in this way
 we obtain easily   \eqref{14.4}. Further,  we get \eqref{14.1}  in  the way considered above.

\noindent   iii)  $\alpha=1/p.$   We already know that    due to the condition $M(\beta,\gamma)<p<m(\beta,\gamma),$ we have $\beta,\gamma<p-1.$ Let $p_{1}=p-\varepsilon,\,\varepsilon>0,\;\beta,\gamma<p_{1}-1.$   If   $\gamma\geq0,$ then we should use      the following reasoning
$$
\left(\int\limits_{a}^{b}  |I_{a+}^{1/p}f  |^{p}\omega dx\right)^{\frac{1}{p} }=\left(\int\limits_{a}^{b}
  |\omega^{\frac{1}{p_{1}}} I_{a+}^{1/p}f  |^{p}\omega^{1-\frac{p}{p_{1}}} dx\right)^{\frac{1}{p}}\leq
 C\left(\int\limits_{a}^{b}  |I_{a+}^{1/p}f  |^{q}\omega^{\frac{q}{p_{1}}} dx\right)^{\frac{1}{q}}\left(\int\limits_{a}^{b} \omega^{\left(1-\frac{p}{p_{1}}\right)\xi'} dx\right)^{\frac{1}{p\xi'}},
$$
where $q=p\,\xi,\, \xi=p_{1}/p(1-p_{1}p^{-1}).$ Thus for sufficiently  small $\varepsilon,$ we obtain
$$
\int\limits_{a}^{b} \omega^{\left(1-\frac{p}{p_{1}}\right)\xi'} dx<\infty.
$$
  Taking into account that    $\gamma>p_{1}p^{-1}-1$ and applying   Theorem 3.10 \cite[p.78]{firstab_lit:samko1987}, we get
$$
\left(\int\limits_{a}^{b}  |I_{a+}^{1/p}f (x) |^{p}\omega(x) dx\right)^{\frac{1}{p} }\leq C \left(\int\limits_{a}^{b}  |I_{a+}^{1/p}f(x)  |^{q}\omega^{\frac{q}{p_{1}}}(x) dx\right)^{\frac{1}{q}}\leq C \left(\int\limits_{a}^{b}  | f(x)  |^{p_{1}}\omega(x)  dx\right)^{\frac{1}{p_{1}}}.
$$
Thus noticing   that  $\|f\|_{L_{p_{1}}(I,\omega)}\leq C \|f\|_{L_{p}(I\!,\,\omega)},$ we obtain  \eqref{14.1}.
If $\gamma<0,$ then   we can choose $\varepsilon$ so that  $\gamma< p_{1}p^{-1}-1.$  We have the following reasoning
$$
\left(\int\limits_{a}^{b}\left|I_{a+}^{1/p}f\right|^{p}\omega\,dx\right)^{\frac{1}{p}}=
  \left(\int\limits_{a}^{b}\left|(x-a)^{\frac{\beta}{p_{1}}}I_{a+}^{1/p}f(x)\right|^{p}(x-a)^{\beta\left(1-\frac{ p}{p_{1}}\right)}  (b-x)^{\gamma\left(\frac{1}{\xi}+\frac{1}{\xi'}\right)}
  \,dx\right)^{\frac{1}{p}}=I_{1},
$$
where $q=p\,\xi,\, \xi=p_{1}/p(1-p_{1}p^{-1}).$   Using the H\"{o}lder inequality, we get
$$
I_{1}=\left(\int\limits_{a}^{b}\left|(x-a)^{\frac{\beta}{p_{1}}}(b-x)^{\frac{\gamma }{p\,\xi}}I_{a+}^{1/p}f(x)\right|^{p}
(x-a)^{\beta\left(1-\frac{ p}{p_{1}}\right)}(b-x)^{\frac{\gamma}{\xi'}}\,dx\right)^{\frac{1}{p}}\leq
$$
$$
\leq \left(\int\limits_{a}^{b}\left|(x-a)^{\frac{\beta}{p_{1}}}(b-x)^{\frac{\gamma }{p\,\xi}}I_{a+}^{1/p}f(x)\right|^{p\xi}
 \,dx\right)^{\frac{1}{p\xi}}\times
 \left(\int\limits_{a}^{b} (x-a)^{\beta\left(1-\frac{ p}{p_{1}}\right)\xi'} (b-x)^{ \gamma  }
 \,dx\right)^{\frac{1}{p\xi'}}.
$$
We can choose $\varepsilon$ so that we   have $\beta\left(1-  p/ p_{1} \right)\xi'>-1.$ Therefore
$$
I_{1}\leq C \left(\int\limits_{a}^{b}\left|(x-a)^{\frac{\beta}{p_{1}}}(b-x)^{\frac{\gamma }{p\,\xi}}I_{a+}^{1/p}f(x)\right|^{p\xi}
 \,dx\right)^{\frac{1}{p\xi}}
=C\left(\int\limits_{a}^{b}\left|I_{a+}^{\alpha}f(x)\right|^{q}(x-a)^{\frac{\beta q}{p_{1}}}(b-x)^{ \gamma  }
 \,dx\right)^{\frac{1}{q}}\leq
 $$
 $$
  \leq
 C\left(\int\limits_{a}^{b}\left|I_{a+}^{\alpha}f(x)\right|^{q}(x-a)^{\frac{\beta q}{p_{1}}}(b-x)^{ \nu  }
 \,dx\right)^{\frac{1}{q}},\,-1<\nu<\gamma.
$$
Applying Theorem 3.10 \cite[p.78]{firstab_lit:samko1987}, we get
$$
\left(\int\limits_{a}^{b}\left|I_{a+}^{\alpha}f(x)\right|^{q}(x-a)^{\frac{\beta q}{p}}(b-x)^{ \nu  }
 \,dx\right)^{\frac{1}{q}}\leq C \|f\|_{L_{p_{1}}(I,\omega)}.
$$
Taking into account that  $\|f\|_{L_{p_{1}}(I,\omega)}\leq C \|f\|_{L_{p}(I\!,\,\omega)},$ we obtain     \eqref{14.1}.

\end{proof}

The results of the monograph \cite{firstab_lit:samko1987} (see Chapter 1) give us a description of the  fractional integral  mapping properties in the space $L_{p}(I,\omega),\,1< p<\infty,\,p\neq1/\alpha,$   where $\omega$ is some power   function.  Actually, the following question is still relevant. What does happen in the case $p=1/\alpha\,?$   In the non-weighted case, the  approach to  this question   is given in the paper \cite{firstab_lit:J.Peetre}. Also, it can be found in a more convenient form in  the monograph   \cite[p.92]{firstab_lit:samko1987}, there   the following  inequality is given
  $$
  \|I^{1/p}_{a+}f\|^{\ast}\leq C\|f\|_{L_{p}(I)},
  $$
  where
 $$
  \|f\|^{\ast}=\sup\limits_{J\subset I}m_{J}f\,,\;m_{J}f=\frac{1}{|J|}\int\limits_{J}|f(x)-f_{J}|dx,\;
  f_{J}=\int\limits_{J}f(x)dx.
 $$
 It is remarkable that   there is  no  mention on the weighted case in the historical review of the  monograph   \cite{firstab_lit:samko1987}.
  In    contrast to  the said  above  approaches,  we  obtain a description of the fractional integral mapping properties  in the space  $L_{p}(I,\omega)$  in terms of the Jacobi series  coefficients. This  approach is principally different  from ones used in \cite{firstab_lit:samko1987}, in  particular  it  allows us  to avoid problems confected with the case $p=1/\alpha.$
  Further, in this section we deal with the   normalized  Jacobi polynomials $p^{ (\alpha,\,\beta) }_{n},\, n\in \mathbb{N}_{0} .$
 \begin{teo}\label{T1}
Suppose
\begin{equation}\label{15}
 \psi\in L_{p}(I,\omega),\,\omega(x)= (x-a)^{\beta}(b-x)^{\gamma},\,\beta,\gamma\in\left[-1/2,1/2\right],\,  M(\beta,\gamma) <p<m(\beta,\gamma);
\end{equation}
then
\begin{equation*}
  I_{a+}^{\alpha} \psi = f ,\,\alpha\in(0,1),
 \end{equation*}
where
$$
f_{m}=\sum\limits_{n=0}^{\infty}(-1)^{n}\psi_{n}A^{\alpha,\beta,\gamma}_{m n},\;m\in \mathbb{N}_{0} .
$$
This theorem can be formulated in the  matrix form
$$
  A^{\alpha,\beta,\gamma}_{+}\times\psi=f\,,\;\;\;\;\sim\;\;\;\;
  \begin{pmatrix}A^{\alpha,\beta,\gamma}_{00}&-A^{\alpha,\beta,\gamma}_{01}&...\\
 A^{\alpha,\beta,\gamma}_{10}&-A^{\alpha,\beta,\gamma}_{11}&...\\
\cdot\\
\cdot\\ \cdot&&...
\end{pmatrix}\times\begin{pmatrix}\psi_{0}\\\psi_{1}\\ \cdot\\ \cdot \\ \cdot \end{pmatrix}= \begin{pmatrix}f_{0}\\f_{1}\\ \cdot\\ \cdot \\ \cdot \end{pmatrix}.
$$
\end{teo}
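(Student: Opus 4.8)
The plan is to combine three facts already available: the boundedness of $I_{a+}^{\alpha}$ on $L_{p}(I,\omega)$ proved in Lemma~\ref{L1}, the basis property of the orthonormal Jacobi system $\{p_{n}^{(\beta,\gamma)}\}_{0}^{\infty}$ in $L_{p}(I,\omega)$ --- which holds exactly under the hypothesis $M(\beta,\gamma)<p<m(\beta,\gamma),\ \beta,\gamma\geq-1/2$ by the criterion of \cite{firstab_lit:H. Pollard 3} --- and the explicit pairing identity \eqref{9}. First I would expand $\psi$ in its Jacobi series $\psi=\sum_{n=0}^{\infty}\psi_{n}p_{n}^{(\beta,\gamma)}$, convergent in the norm of $L_{p}(I,\omega)$ by the basis property. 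Since, by Lemma~\ref{L1}, $I_{a+}^{\alpha}$ is a bounded linear operator on $L_{p}(I,\omega)$, it may be applied termwise, so that $f:=I_{a+}^{\alpha}\psi$ belongs to $L_{p}(I,\omega)$ and $f=\sum_{n=0}^{\infty}\psi_{n}\,I_{a+}^{\alpha}p_{n}^{(\beta,\gamma)}$ with convergence in $L_{p}(I,\omega)$.

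Next I would identify the Jacobi coefficients $f_{m}=(f,p_{m}^{(\beta,\gamma)})_{L_{2}(I,\omega)}$ of $f$. This pairing is well defined on all of $L_{p}(I,\omega)$: by the H\"{o}lder inequality $\int_{a}^{b}|g|\,|p_{m}|\,\omega\,dx\leq\|g\|_{L_{p}(I,\omega)}\|p_{m}\|_{L_{p'}(I,\omega)}$ for every $g\in L_{p}(I,\omega)$, and $\|p_{m}\|_{L_{p'}(I,\omega)}<\infty$ because $p_{m}$ is a polynomial and $\omega\in L_{1}(I)$. Hence $g\mapsto(g,p_{m}^{(\beta,\gamma)})_{L_{2}(I,\omega)}$ is a bounded linear functional on $L_{p}(I,\omega)$, and passing it through the $L_{p}(I,\omega)$-convergent series for $f$ gives
$$
f_{m}=\sum_{n=0}^{\infty}\psi_{n}\bigl(I_{a+}^{\alpha}p_{n}^{(\beta,\gamma)},\,p_{m}^{(\beta,\gamma)}\bigr)_{L_{2}(I,\omega)}=\sum_{n=0}^{\infty}\psi_{n}\bigl(p_{m}^{(\beta,\gamma)},\,I_{a+}^{\alpha}p_{n}^{(\beta,\gamma)}\bigr)_{L_{2}(I,\omega)},
$$
the second equality holding because all the functions involved are real-valued. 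By \eqref{9} the right-hand side equals $\sum_{n=0}^{\infty}(-1)^{n}\psi_{n}A^{\alpha,\beta,\gamma}_{mn}$, which is precisely the claimed formula for $f_{m}$. Rewriting the identities $f_{m}=\sum_{n}(-1)^{n}\psi_{n}A^{\alpha,\beta,\gamma}_{mn}$, $m\in\mathbb{N}_{0}$, as the action on $(\psi_{0},\psi_{1},\dots)^{\mathsf T}$ of the matrix $A^{\alpha,\beta,\gamma}_{+}$ of \eqref{11} --- whose alternating column signs absorb the factor $(-1)^{n}$ --- delivers the matrix form; and since $\{p_{m}^{(\beta,\gamma)}\}$ is a basis in $L_{p}(I,\omega)$, the family $(f_{m})$ determines $f$ uniquely, so the statement is a genuine characterization of $I_{a+}^{\alpha}\psi$.

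The genuine difficulty has, in effect, already been absorbed into Lemma~\ref{L1}: the only delicate point here is the termwise application of $I_{a+}^{\alpha}$ to the Jacobi expansion of $\psi$, which is licensed solely by the bound \eqref{14.1}. The remaining steps --- well-posedness of the coefficient functional on $L_{p}(I,\omega)$, interchange of that functional with the series, and the bookkeeping with \eqref{9} and \eqref{11} --- are routine. The one verification worth keeping in mind is that the hypothesis $M(\beta,\gamma)<p<m(\beta,\gamma)$ with $\beta,\gamma\in[-1/2,1/2]$ does place $p$ strictly inside the Pollard interval, so that the basis property is genuinely at our disposal; this is the same range already exploited in Lemma~\ref{L1}.
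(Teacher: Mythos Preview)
Your proposal is correct and follows essentially the same route as the paper: invoke Pollard's basis property for $\{p_{n}^{(\beta,\gamma)}\}$ under the stated range of $p$, use Lemma~\ref{L1} to pass $I_{a+}^{\alpha}$ through the partial sums, then read off $f_{m}$ via the continuous coefficient functional and formula~\eqref{9}, with the matrix form coming from~\eqref{11}. Your write-up is slightly more explicit about why the coefficient functional $g\mapsto(g,p_{m})_{L_{2}(I,\omega)}$ is bounded on $L_{p}(I,\omega)$, but the argument is otherwise identical.
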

\begin{proof}
Note, that according to the results of the paper  \cite{firstab_lit:H. Pollard 3} the system of the  normalized Jacobi   polynomials   has    a basis  property in $L_{p}(I,\omega),\,M(\beta,\gamma) <p<m(\beta,\gamma).$   Hence
$$
 \sum\limits_{n=0}^{l}\psi_{n}p_{n}\stackrel{L_{p}(I\!,\,\omega)}{\longrightarrow} \psi\in L_{p}(I,\omega),\;l\rightarrow \infty.
$$
  Using   Lemma \ref{L1}, we obtain
$$
\sum\limits_{n=0}^{l}\psi_{n}I^{\alpha}_{a+}p_{n}  \stackrel{L_{p}(I\!,\,\omega)}{\longrightarrow} I^{\alpha}_{a+}\left(\sum\limits_{n=0}^{\infty}\psi_{n}p_{n}\right)=I^{\alpha}_{a+}\psi,\;l\rightarrow \infty.
$$
Hence
$$
\sum\limits_{n=0}^{l}\psi_{n}\left(I^{\alpha}_{a+}p_{n},p_{m}\right)_{L_{p}(I\!,\,\omega)} \longrightarrow\left(  I^{\alpha}_{a+}\psi,p_{m}\right)_{L_{p}(I\!,\,\omega)},\;l\rightarrow \infty .
$$
Applying   first     formula  \eqref{9}, we obtain
$$
f_{m}=\left( I^{\alpha}_{a+}\psi,p_{m}\right)_{L_{p}(I\!,\,\omega)}
=  \sum\limits_{n=0}^{\infty}(-1)^{n}\psi_{n}A^{\alpha,\beta,\gamma}_{m n}.
$$
  Using denotations    \eqref{11},  we obtain the matrix form   for the statement of this theorem.
 \end{proof}

The following result is  formulated in terms of the Jacobi series coefficients and is devoted to the representation of a function by the fractional integral.
Consider the Abel  equation under  most general assumptions relative to the right part
\begin{equation}\label{16}
 I^{\alpha}_{a+}\varphi =f,\,\alpha\in(0,1).
\end{equation}
  If the next conditions hold
\begin{equation}\label{16.1}
  I^{1-\alpha}_{a+}f\in AC (\bar{I}),\; (I^{1-\alpha}_{a+}f)    (a)=0,
\end{equation}
  then there  exists a unique solution of    equation \eqref{16}   in the class $L_{1}(I)$    (see Theorem 2.1 \cite[p.31]{firstab_lit:samko1987}).
The sufficient conditions for   existence and  uniqueness of the  Abel  equation solution    are established in the  following theorem under the minimum  assumptions relative to the right part of \eqref{16}. In comparison with the ordinary Abel equation, we avoid   imposing  conditions similar to \eqref{16.1}, moreover  we refuse the assumption that the right part is a  Lebesgue integrable function.
\begin{teo}\label{T2}
Suppose  $ \omega(x)=(x-a)^{\beta}(b-x)^{\gamma},\, \beta,\gamma \in[-1/2,1/2] ,\,M(\beta,\gamma)<p< m (\beta,\gamma) ,$ the right part of    equation \eqref{16} such that
\begin{equation}\label{17}
 \left\|D^{\alpha}_{\!a+} S_{k} f \right\|_{L_{p}(I\!,\,\omega)}\! \leq C,\;k\in \mathbb{N}_{0},\;\;\left|\sum\limits_{n=0}^{\infty}(-1)^{n} f_{n}A^{-\alpha,\beta,\gamma}_{mn}\right|\sim m^{-\lambda},\;m\rightarrow \infty,\;\lambda\in[0,\infty);
\end{equation}
then  there exists  a   unique solution of    equation \eqref{16} in     $L_{p}(I,\omega),$  the   solution belongs to   $L_{q}(I,\omega),$  where: $q=p,$  when $0\leq\lambda\leq 1/2 \,;\;q=\max\{p,t\},\,  t<(2s-1)/(s-\lambda),$ when $\,1/2<\lambda<s\;(s=3/2+\max\{\beta,\gamma\});$  $q$ is arbitrary large, when $\lambda\geq\,s.$
 Moreover the solution   is represented by a convergent in $L_{q}(I,\omega)$   series
\begin{equation}\label{18}
\psi(x)=\sum\limits_{m=0}^{\infty}p_{m}(x) \sum\limits_{n=0}^{\infty}(-1)^{n}f_{n}A^{-\alpha,\beta,\gamma}_{mn}.
\end{equation}
\end{teo}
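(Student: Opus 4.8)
The plan is to produce the solution as a weak limit and then read off its $L_q$-integrability from the decay of its Jacobi coefficients. Set $\Psi_k:=D^{\alpha}_{a+}S_kf$; by the first part of \eqref{17} these lie in $L_p(I,\omega)$ with $\|\Psi_k\|_{L_p(I,\omega)}\le C$. Formula \eqref{10} gives $(\Psi_k,p_m)_{L_2(I,\omega)}=\sum_{n=0}^{k}(-1)^nf_nA^{-\alpha,\beta,\gamma}_{mn}$, so the $m$-th Jacobi coefficient of $\Psi_k$ converges, as $k\to\infty$, to $\psi_m:=\sum_{n=0}^{\infty}(-1)^nf_nA^{-\alpha,\beta,\gamma}_{mn}$ (the series converges, being exactly the object estimated in \eqref{17}). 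Since $p>M(\beta,\gamma)\ge1$, the space $L_p(I,\omega)$ is reflexive, so some subsequence $\Psi_{k_j}$ converges weakly to a function $\psi\in L_p(I,\omega)$; testing against each $p_m\in L_{p'}(I,\omega)$ shows that the Jacobi coefficients of $\psi$ are precisely the $\psi_m$. Any two subsequential weak limits thus have identical Jacobi coefficients and hence coincide by completeness of the Jacobi system, so in fact $\Psi_k\rightharpoonup\psi$ along the whole sequence.

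Next I would verify that $\psi$ solves \eqref{16} and is the unique $L_p(I,\omega)$-solution. Every polynomial belongs to $I^{\alpha}_{a+}(L_1)$ (its fractional integral of order $1-\alpha$ is absolutely continuous and vanishes at $a$), hence $I^{\alpha}_{a+}\Psi_k=I^{\alpha}_{a+}D^{\alpha}_{a+}S_kf=S_kf$. By Lemma \ref{L1}, $I^{\alpha}_{a+}$ is bounded on $L_p(I,\omega)$, hence weakly sequentially continuous, so $S_kf=I^{\alpha}_{a+}\Psi_k\rightharpoonup I^{\alpha}_{a+}\psi$; on the other hand $S_kf\to f$ strongly by the basis property of the Jacobi system in $L_p(I,\omega)$, so $I^{\alpha}_{a+}\psi=f$. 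Uniqueness follows because $\beta,\gamma<p-1$ (established in the proof of Lemma \ref{L1}) forces $L_p(I,\omega)\subset L_1(I)$, on which $I^{\alpha}_{a+}$ is injective (the $L_1$-uniqueness for the Abel equation, \cite{firstab_lit:samko1987}): a second solution would differ from $\psi$ by an $L_1$-function annihilated by $I^{\alpha}_{a+}$. Combined with the basis property (which gives $S_N\psi=\sum_{m\le N}\psi_mp_m\to\psi$ in $L_p(I,\omega)$), this already establishes \eqref{18} for $q=p$, and in particular settles the range $0\le\lambda\le1/2$; no information on the decay of the $\psi_m$ is used for this part.

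The heart of the argument is the improved integrability when $\lambda>1/2$, obtained by a dyadic block estimate. Put $\Delta_j:=\sum_{2^j\le m<2^{j+1}}\psi_mp_m$. By orthonormality and $|\psi_m|\le Cm^{-\lambda}$ one gets $\|\Delta_j\|_{L_2(I,\omega)}^2=\sum_{2^j\le m<2^{j+1}}|\psi_m|^2\le C2^{j(1-2\lambda)}$, and the classical growth bound $\|p_m\|_{L_\infty(\bar{I})}\le Cm^{1/2+\max\{\beta,\gamma\}}$ for the normalized Jacobi polynomials gives $\|\Delta_j\|_{L_\infty(\bar{I})}\le C2^{j(s-\lambda)}$ with $s=3/2+\max\{\beta,\gamma\}$. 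For $t\ge2$ the elementary interpolation $\|\Delta_j\|_{L_t(I,\omega)}^t\le\|\Delta_j\|_{L_\infty(\bar{I})}^{t-2}\|\Delta_j\|_{L_2(I,\omega)}^2$ yields $\|\Delta_j\|_{L_t(I,\omega)}\le C2^{j[(s-\lambda)-(2s-1)/t]}$, with the same bound for every partial sub-block (for $t<2$ one uses $L_2(I,\omega)\hookrightarrow L_t(I,\omega)$ instead). Consequently, whenever this exponent is negative, namely for $t<(2s-1)/(s-\lambda)$ when $1/2<\lambda<s$ and for every finite $t$ when $\lambda\ge s$, the partial sums $S_N\psi$ form a Cauchy sequence in $L_t(I,\omega)$; their limit there agrees a.e.\ with $\psi$ (compare a.e.-convergent subsequences of the $L_p$-convergent and $L_t$-convergent sequences of partial sums), so $\psi\in L_t(I,\omega)$ and $S_N\psi\to\psi$ in $L_{\max\{p,t\}}(I,\omega)$. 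Recalling $S_N\psi=\sum_{m\le N}p_m\sum_{n}(-1)^nf_nA^{-\alpha,\beta,\gamma}_{mn}$, this is the asserted series \eqref{18}.

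I expect the main obstacle to be this last step: matching the sharp exponent $(2s-1)/(s-\lambda)$ rests on the sharp $L_\infty$-growth rate $m^{1/2+\max\{\beta,\gamma\}}$ of the normalized Jacobi polynomials (a nontrivial classical fact) together with getting the interpolation bookkeeping exactly right; one must also take care that the limit of $S_N\psi$ in the larger space is the solution $\psi$ itself and not merely some function with the same Jacobi coefficients, which is why I would argue through a.e.-convergent subsequences rather than invoking a basis/completeness statement in $L_t(I,\omega)$, where the Jacobi system need not be a basis once $t\ge m(\beta,\gamma)$.
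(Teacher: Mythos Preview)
Your proof is correct and reaches the same conclusion with the same sharp exponent, but it diverges from the paper's argument in several places.

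For the passage from the weak limit $\psi$ to $I^{\alpha}_{a+}\psi=f$, you use that a bounded operator is weak--weak continuous together with the basis property $S_kf\to f$; the paper instead writes $(S_kf,p_m)=(D^{\alpha}_{a+}S_kf,\omega^{-1}I^{\alpha}_{b-}\varphi^{(m)}_m)$ via the Dirichlet formula, shows $\omega^{-1}I^{\alpha}_{b-}\varphi^{(m)}_m\in L_{p'}(I,\omega)$ by Lemma~\ref{L1} and Riesz representation, and then passes to the weak limit coefficientwise. Your route is shorter; the paper's route has the mild advantage that it only needs the Jacobi coefficients $f_m$ and the identity $(I^{\alpha}_{a+}\psi,p_m)=f_m$, so it does not invoke $S_kf\to f$ in $L_p(I,\omega)$ as a separate step. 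For uniqueness you embed $L_p(I,\omega)\hookrightarrow L_1(I)$ (using $\beta,\gamma<p-1$) and invoke the classical $L_1$-injectivity of $I^{\alpha}_{a+}$; the paper instead tests against $D^{\alpha}_{b-}\eta$ for $\eta\in C_0^\infty$ and uses a Hahn--Banach argument on subintervals. Your argument is more elementary. For the improved integrability when $\lambda>1/2$, the paper performs a change of variable $\tau=\int_a^x\omega$ to reduce to an unweighted orthonormal system and applies the Zygmund--Marcinkiewicz theorem with $M_n\sim n^{s-1}$; your dyadic block decomposition with the $L_2$--$L_\infty$ interpolation $\|\Delta_j\|_{L_t}^t\le\|\Delta_j\|_\infty^{t-2}\|\Delta_j\|_{L_2}^2$ is essentially a direct proof of that theorem in this situation and gives the same threshold $(2s-1)/(s-\lambda)$. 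Both approaches rest on the same classical bound $\|p_m\|_{L_\infty(\bar I)}\le Cm^{1/2+\max\{\beta,\gamma\}}$. Your care about identifying the $L_t$-limit with $\psi$ via a.e.-convergent subsequences is warranted; the paper handles it by noting that, after the change of variable, the basis property transfers to the new system in $L_p(0,B)$, so equality of Jacobi coefficients forces equality a.e.
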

\begin{proof}
  Applying   first formula \eqref{10},   we obtain  the following relation
\begin{equation}\label{19}
\left( D^{\alpha}_{\!a+} S_{k} f ,p_{m}  \right)_{L_{2}(I\!,\,\omega)} \longrightarrow\sum\limits_{n=0}^{\infty}(-1)^{n} f_{n}A^{-\alpha,\beta,\gamma}_{mn},\;k\rightarrow \infty,\,m\in \mathbb{N}_{0},
\end{equation}
  We can  easily  verify that  $M(\beta,\gamma)<p'<m(\beta,\gamma).$ Hence   due to Theorem   A
  \cite{firstab_lit:H. Pollard 3} the system  $\{p_{n}\}_{0}^{\infty}$ has a basis property in the space $L_{p'}(I,\omega).$   Since   relation \eqref{19} holds and  the sequence $\left\{D^{\alpha}_{\!a +}S_{k} f\right\}_{0}^{\infty}$   is bounded in the sense of   norm $L_{p}(I,\omega),$ then due to the  well-known theorem,  we have that the sequence  $\left\{D^{\alpha}_{\!a +} S_{k} f\right\}_{0}^{\infty}$   converges weakly   to some function
 $\psi\in L_{p}(I,\omega).$     Using Theorem 2.4  \cite[p.44]{firstab_lit:samko1987} and  the Dirichlet formula (see Theorem 1.1 \cite[p.9]{firstab_lit:samko1987}),    we get
 $$
\left( S_{k} f ,p_{m} \right)_{L_{2}(I\!,\,\omega)}= \left(I^{\alpha}_{\!a+}  D^{\alpha}_{\!a +} S_{ k} f ,p_{m} \right)_{L_{2}(I\!,\,\omega)} =
\left( D^{\alpha}_{\!a +}S_{k} f ,\omega^{-1}\!I^{\alpha}_{b-}\varphi_{m} \right)_{L_{2}(I\!,\,\omega)}.
 $$
 Let us show that $\omega^{-1}\!I^{\alpha}_{b-}\varphi_{m}\in L_{p'}(I,\omega).$ For this purpose consider the functional
 $$
 l_{1}(f):=\left(   f ,\omega^{-1}I^{\alpha}_{b-}\varphi_{m} \right)_{L_{2}(I\!,\,\omega)}.
 $$
  Using  the H\"{o}lder inequality,   Lemma \ref{1}, we have
 \begin{equation}\label{19.1}
  \left(  I^{\alpha}_{a+} f ,p_{m}  \right)_{L_{2}(I\!,\,\omega)}\leq C\|f\|_{L_{p}(I\!,\,\omega)}\|p_{m}\|_{L_{p'}(I,\omega)}< \infty.
\end{equation}
Hence using the Dirichlet formula, we have
 $$
  \left(  I^{\alpha}_{a+} f ,p_{m}  \right)_{L_{2}(I\!,\,\omega)}= \left(   f ,\omega^{-1}I^{\alpha}_{b-}\varphi_{m} \right)_{L_{2}(I\!,\,\omega)}.
  $$
By virtue of this fact, we can rewrite  relation \eqref{19.1} in the following form $$|l_{1}(f)|\leq C \|f\|_{L_{p}(I\!,\,\omega)},\,\forall f\in L_{p}(I,\omega).$$
Using  the  Riesz representation theorem,
we obtain $\omega^{-1}I^{\alpha}_{b-}\varphi_{m}\in L_{p'}(I,\omega).$
Hence,  we get
 $$
 \left( D^{\alpha}_{\!a +} S_{ k} f ,\omega^{-1}I^{\alpha}_{b-}\varphi_{m} \right)_{L_{2}(I\!,\,\omega)}\rightarrow \left( \psi ,\omega^{-1}I^{\alpha}_{b-}\varphi_{m} \right)_{L_{2}(I\!,\,\omega)}.
 $$
 Using  the  Dirichlet formula, we obtain
 \begin{equation}\label{20}
 \left( \psi ,\omega^{-1}I^{\alpha}_{b-}\varphi_{m} \right)_{L_{2}(I\!,\,\omega)} =
\left(  I^{\alpha}_{a+}\psi ,p_{m}    \right)_{L_{2}(I\!,\,\omega)},\;m\in \mathbb{N}_{0} .
 \end{equation}
Hence
 $$
 \left( S_{k} f ,p_{m} \right)_{L_{2}(I\!,\,\omega)} \longrightarrow \left(  I^{\alpha}_{a+}\psi ,p_{m}    \right)_{L_{2}(I\!,\,\omega)} ,\,k\rightarrow \infty,\;m\in \mathbb{N}_{0} .
 $$
Taking into account that
\begin{equation*}
\left( S_{k} f ,p_{m} \right)_{L_{2}(I\!,\,\omega)}=  \left\{ \begin{aligned}
 f_{m},\;k\geq m,\\
  \!0  ,\;  k<m\, \\
\end{aligned}
 \right.\;\;,
\end{equation*}
we obtain
$$
\left(  I^{\alpha}_{a+}\psi ,p_{m}    \right)_{L_{2}(I\!,\,\omega)}=f_{m},\,m\in \mathbb{N}_{0}.
$$
 Using the  uniqueness property of the  Jacobi  series expansion, we obtain   $I^{\alpha}_{a+}\psi=f$     almost everywhere.
Hence  there  exists a  solution  of the Abel  equation \eqref{16}.   If we assume  that there  exists   another solution $\phi\in L_{p}(I,\omega),$ then we get  $I^{\alpha}_{a+}\psi=I^{\alpha}_{a+}\phi$ almost everywhere. Consider the function  $\eta\in C_{0}^{\infty}(I).$
   Using Theorem 2.4   \cite[p.44]{firstab_lit:samko1987}  and the  Dirichlet formula,  we have
$$
\left(\psi-\phi,\eta \right)_{L_{2}(I)}=\left(\psi-\phi,I^{\alpha}_{b-}D_{b-}^{\alpha}\eta \right)_{L_{2}(I)}=\left(I^{\alpha}_{a+}[\psi-\phi],D_{b-}^{\alpha}\eta \right)_{L_{2}(I)}=0.
$$
  Consider an interval $I'\subset I.$  Note that  $\psi,\phi\in L_{p }(I'),\, \forall I'.$ Since   $C_{0}^{\infty}(I')\subset C_{0}^{\infty}(I ),$ here we assume that  the functions belonging  to $C_{0}^{\infty}(I')$ have the zero    extension outside of $I',$  then we obtain
 $$
\left(\psi-\phi,\eta \right)_{L_{2 }(I')}=0,\,\forall \eta \in C_{0}^{\infty}(I').
$$
We claim that  $\psi\neq\phi.$ Hence  in accordance  with the consequence of the Hahn-Banach theorem there exists the element $\vartheta\in L_{p'}(I'),$ such that
$$
 \left(\psi-\phi,\vartheta \right)_{L_{2 }(I')} =\|\psi-\phi\|_{L_{p }(I')}>0.
$$
On the other hand, there exists  the sequence $\{\eta_{n}\}_{1}^{\infty}\subset C_{0}^{\infty}(I'),$ such  that $\eta_{n}\stackrel{L_{p'}(I' )}{\longrightarrow}\vartheta.$ Hence
$$
0=\left(\psi-\phi,\eta_{n} \right)_{L_{2 }(I')}\rightarrow \left(\psi-\phi,\vartheta \right)_{L_{2 }(I')}.
$$
Thus we come to contradiction. Hence    $\psi=\phi$ almost everywhere  on $I',\,\forall I'\subset I. $ It implies
that $\psi=\phi$ almost everywhere  on $I.$  The    uniqueness has been proved. Now let us proceed to the following part of the  proof.
Note  that  it was proved above  $\psi \in L_{p}(I,\omega), $   when $0\leq\lambda<\infty.$
Let us show that   $\psi \in L_{q}(I,\omega), $ where $  q< (2s-1)/ (s-\lambda) ,\,1/2<\lambda<s.$   In accordance with the  reasoning given above,      we have
\begin{equation*}
\left(D^{\alpha}_{\!a +} S_{k} f ,p_{m} \right)_{L_{2}(I\!,\,\omega)} \longrightarrow \left( \psi,p_{m} \right)_{L_{2}(I\!,\,\omega)},\;m\in \mathbb{N}_{0}.
\end{equation*}
Combining this fact with   \eqref{19}, we get
\begin{equation}\label{21}
\psi_{m}=\left( \psi,p_{m} \right)_{L_{2}(I\!,\,\omega)}  =\sum\limits_{n=0}^{\infty}(-1)^{n}f_{n}A^{-\alpha,\beta,\gamma}_{mn},\;m\in \mathbb{N}_{0}.
\end{equation}
Using the  theorem conditions,  we have
$$
|\psi_{m}|\sim m^{-\lambda},\,m\rightarrow \infty.
$$
Now we need an adopted version of the    Zigmund-Marczinkevich theorem  (see \cite{firstab_lit: Marz}), which  establishes the following.
Let $\{\phi_{n}\}$ be an orthoghonal system on the segment  $\bar{I}$ and $\|\phi_{n}\|_{L_{\infty}(I)}\leq M_{n},\,(n=1,2,...),$ where $M_{n}$ is a monotone increasing sequence of   real numbers. If $q\geq2$ and we have
\begin{equation}\label{21a}
\Omega_{q}(c)=\left(\sum\limits_{n=1}^{\infty}|c_{n}|^{q}n^{q-2}M_{n}^{q-2}\right)^{1/q}<\infty,
\end{equation}
then the series
$
\sum_{n=1}^{\infty}c_{n}\phi_{n}(x)
$
converges in  $ L_{q}(I)$ to some function  $f\in L_{q}(I)$  and $\|f\|_{L_{q}(I)}\leq C\Omega_{q}(c).$
We aim to apply this theorem   to the case of the Jacobi system, however  we need some auxiliary reasoning. As the matter of fact,   we deal with the weighted $L_{p}(I,\omega)$ spaces, but the Zigmund-Marczinkevich theorem in its pure form formulated in terms of  the non-weighted case. Consider the following change of the variable $\int _{a}^{x}\omega(t)dt=\tau.$ For the  solution $\psi\in L_{p}(I,\omega) ,$ we have
\begin{equation}\label{21b}
\psi_{n}=\int\limits_{a}^{b}\psi(x)p_{n}(x)\omega(x)dx=\int\limits_{0}^{B}\tilde{\psi}(\tau)\phi_{n}(\tau) d\tau,\,
\end{equation}
where $\tilde{\psi}(\tau)=\psi(\kappa(\tau)),\,\phi_{n}(\tau)=p_{n}(\kappa(\tau)),\,\kappa(\tau)=(b-a)^{-(\beta+\gamma+1)}B^{-1}_{\tau}(\beta+1,\gamma+1),\, B=(b-a)^{\beta+\gamma+1}B(\beta+1,\gamma+1).$
Hence, if we note the    estimate    $|p_{n}(x)|\leq C n^{a+1/2},\,a=\max\{\beta,\gamma\},\,x\in \bar{I} $ (see Theorem 7.3 \cite[p.288]{firstab_lit:Suetin}), then due to the  change of  the variable, we have $ |\phi_{n}(\tau)|\leq  V_{n},  \,\tau\in[0,B],\,V_{n}=C n^{a+1/2}.$ Also, it is clear that $(\phi_{m},\phi_{n})_{L_{2}(0,B)}=\delta_{mn},$ where $\delta_{mn}$ is the    Kronecker symbol. Thus $\{\phi_{m}\}_{0}^{\infty}$ is the   orthonormal system on $[0,B]$ that  satisfies   the  conditions of the Zigmund-Marczinkevich theorem.
 It can easily be checked  that due to the    theorem  conditions  the following series is convergent
\begin{equation}\label{22}
     \sum_{m=0}^{\infty} m^{q\,(s -\lambda)-2s}  <\infty,\,1/2<\lambda<s,\,q<(2s-1)/(s-\lambda).
\end{equation}
For the values   $\lambda\geq s,$  series \eqref{22} converges for an  arbitrary positive $q.$
In accordance with given above, we have
$$
 \left\{\sum_{m=0}^{\infty}|\psi_{m}|^{q}m^{q-2}V^{q-2}_{m}\right\}^{1/q}\leq C  \left\{\sum_{m=0}^{\infty} m^{q\,(s -\lambda)-2s}\right\}^{1/q}<\infty.
$$
Thus all conditions of the  Zigmund-Marczinkevich theorem   are fulfilled. Hence, we can conclude  that there exists a function $\nu$ such  that the next estimate holds
\begin{equation}\label{23}
  \|\nu\|_{L_{q}(0,B)} \leq C  \left\{\sum_{m=0}^{\infty}|\nu_{m}|^{q}m^{q-2}M^{q-2}_{m}\right\}^{1/q}<\infty.
\end{equation}
  Since the system $\{p_{m}\}_{0}^{\infty}$ has a basis property in   $L_{p}(I,\omega),$ then  it is not  hard to prove  that the system $\{ \phi_{m} \}_{0}^{\infty}$ has a basis property in   $L_{p}(0,B).$ Since the functions $\nu$  and $\tilde{\psi}$ have the same Jacobi series  coefficients, then  $\nu=\tilde{\psi}$ almost everywhere  on $(0,B).$    By virtue of the chosen change of the variable, we obtain  $\|\psi\|_{L_{q}(I,\omega)}=\|\tilde{\psi}\|_{L_{q}(0,B) }.$ Consequently,  the solution    $\psi$  belongs to the space $L_{q}(I,\omega),\, q<(2s-1)/(s-\lambda),$ when $\,1/2<\lambda<s$ and   the index   $q$ is arbitrary large, when $\lambda\geq s.$ Taking into account \eqref{21b}   and applying the      Zigmund-Marczinkevich theorem, we have
$$
 \sum\limits_{m=0}^{k}\phi_{m} \psi_{m} \stackrel{L_{q}(0,B)}{\longrightarrow} \tilde{\psi},\,k \rightarrow \infty.
$$
Using the inverse  change of the variable and applying   \eqref{21}, we obtain
   \eqref{18}.
 \end{proof}

\subsection{Non-simple property problem}

The questions related  to existence of such an invariant subspace of the operator that the operator restriction to the subspace  is selfadjont   (the so-called non-simple property \cite[p.275]{firstab_lit:1Gohberg1965} ) are still relevant for today. Thanks to the powerful tool provided by   the   Jacobi polynomials theory, we are able to approach a little close to solving   this  problem for the Riemann-Liouville operator.

In this section we deal with the  so-called normalized  ultraspherical polynomials $p^{  (\beta,\,\beta) }_{\,n}(x)$
in the   weighted space $L_{p}(I,\omega),\, \omega(x)=\left[(x-a) (b-x)\right]^{\beta}\!\!,\,\beta\geq -1/2 ,\,1\leq p<\infty .$ In accordance with  \cite{firstab_lit:H. Pollard 2}     the system of the  normalized ultraspherical polynomials    has   a basis property in    $L_{p}(I,\omega),$ if
$
 1-1/(3+2\beta)<p/2<1+ 1/(1+2\beta),\,\lambda=\beta+1/2
$
and does not have a basis property, if $1/2\leq p/2<1-1/(3+2\beta) $ or $p/2>1+ 1/(1+2\beta)  .$
Having noticed that $A^{\alpha,\beta,\beta}_{mn}=A^{\alpha,\beta,\beta}_{nm},\,m,n\in \mathbb{N}_{0},$ using  formulas   \eqref{9}, we obtain
\begin{equation}\label{24}
\int\limits_{a}^{b}  \left(I^{\alpha}_{a+} p _{n}\right)(x) p _{m}(x) \omega(x)dx=(-1)^{n+m}\int\limits_{a}^{b} p _{n}(x)\left(I^{\alpha}_{a+} p _{m}\right)(x)\omega(x)dx\,;
$$
$$
\int\limits_{a}^{b}  \left(I^{\alpha}_{b-} p _{n}\right)(x) p _{m}(x) \omega(x)dx=(-1)^{n+m}\int\limits_{a}^{b} p _{n}(x)\left(I^{\alpha}_{b-} p _{m}\right)(x)\omega(x)dx,\;\;m,n\in \mathbb{N}_{0}.
\end{equation}
Taking into account these formulas we   conclude that the fractional integral operator  is symmetric
  in the subspaces of $L_{2}(I,\omega)$ generated respectively by   the  even system $\{p_{2k }\}_{0}^{\infty}$  and  the odd system $\{p_{2k+1}\}_{0}^{\infty}$  of the  normalized  ultraspherical  polynomials.
 Let us denote    by $L^{+}_{2}(I,\omega),\,L^{-}_{2}(I,\omega)$ these subspaces respectively.
The following theorem gives  us an alternative.
\begin{teo}\label{T3} $(\mathrm{Alternative})$ Suppose $\alpha\in (1/2,3/2),\,\omega(x)=(x-a)^{\beta}(b-x)^{\beta},\,\alpha-1/2<\beta <1;$ then we have the following alternative:
  Either the  fractional integral  operator     acting in $L_{2}(I,\omega)$   is non-simple or one  has  an infinite  sequence   of  the   included  invariant subspaces having   the non-zero  intersection with  both subspaces $L^{+}_{2}(I,\omega),\,L^{-}_{2}(I,\omega).$
 \end{teo}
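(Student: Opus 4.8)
The plan is to present $T:=I^{\alpha}_{a+}$ as a compact operator on $H:=L_{2}(I,\omega)$ conjugated by an involution. First I would introduce the operator $U$ on $H$ fixed on the orthonormal ultraspherical basis by $Up_{n}=(-1)^{n}p_{n}$; it is a selfadjoint unitary involution whose $\pm1$ eigenspaces are $L^{+}_{2}(I,\omega)$ and $L^{-}_{2}(I,\omega)$. Reading off \eqref{9} together with the symmetry $A^{\alpha,\beta,\beta}_{mn}=A^{\alpha,\beta,\beta}_{nm}$ (that is, \eqref{24}) one obtains, on the basis and hence on $H$, the identity $T^{*}=UTU$; equivalently $S:=UT$ is selfadjoint. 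Under the hypotheses $\alpha\in(1/2,3/2)$, $\alpha-1/2<\beta<1$ the operator $T$ is Hilbert--Schmidt on $H$: after the unitary $f\mapsto\omega^{1/2}f$ it has kernel $(x-a)^{\beta/2}(b-x)^{\beta/2}(t-a)^{-\beta/2}(b-t)^{-\beta/2}(x-t)^{\alpha-1}$ on $\{a<t<x<b\}$, which is square integrable precisely because $\alpha>1/2$ and $\beta<1$. Thus $S$ is compact and selfadjoint and $T=US$.

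Next I would record two facts. Since the Riemann--Liouville integral is injective on $L_{1}(I)\supseteq L_{2}(I,\omega)$ (the inclusion being valid since $\beta<1$), we have $\ker T=\{0\}$, hence $\ker S=\{0\}$ and $H=\overline{\bigoplus_{k}H_{k}}$ with $H_{k}=\ker(S-\mu_{k}I)$ finite dimensional, $\mu_{k}\in\mathbb{R}\setminus\{0\}$, $\mu_{k}\to0$. Moreover no eigenvector $e$ of $S$ is purely even or purely odd: if $Ue=\pm e$ then $Te=USe=\pm\mu_{k}Ue=\pm\mu_{k}e$ with $\mu_{k}\neq0$, contradicting the quasinilpotence of the Volterra operator $I^{\alpha}_{a+}$. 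Hence $e=e^{+}+e^{-}$ with $e^{\pm}\in L^{\pm}_{2}(I,\omega)$ and both summands non-zero; and since $Ue=\mu_{k}^{-1}Te$, any $T$-invariant subspace containing $e$ also contains $e^{+}$ and $e^{-}$, so it meets both $L^{+}_{2}(I,\omega)$ and $L^{-}_{2}(I,\omega)$.

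Now the dichotomy. If $T$ admits any non-zero subspace that reduces it and on which it acts selfadjointly, $T$ is non-simple and the first alternative holds. Otherwise I would exhibit the nested family directly: for $c\in\bigl(a,(a+b)/2\bigr)$ set $\mathfrak{H}_{c}:=\{f\in H:\ f=0\ \text{a.e. on}\ (a,c)\}$. Each $\mathfrak{H}_{c}$ is a proper closed $I^{\alpha}_{a+}$-invariant subspace — if $f$ vanishes on $(a,c)$ then so does $I^{\alpha}_{a+}f$ — and they are strictly nested. Moreover $\mathfrak{H}_{c}$ contains every $L_{2}$-function supported in the midpoint-symmetric subinterval $[c,a+b-c]$, together with the even and odd parts of such a function about $(a+b)/2$; choosing the function non-symmetric one sees that both $\mathfrak{H}_{c}\cap L^{+}_{2}(I,\omega)$ and $\mathfrak{H}_{c}\cap L^{-}_{2}(I,\omega)$ are non-zero. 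Taking $c_{n}\uparrow(a+b)/2$ then yields the required infinite included sequence of invariant subspaces, each meeting both parity subspaces.

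Two points are delicate. The first is the honest passage from the formal computations of Section~2 to the operator identity $T^{*}=UTU$ on all of $H$, and to the boundedness of $T$ in the present weight range (not covered by Lemma~\ref{L1}, since here $\beta$ may exceed $1/2$); I would close this with the Hilbert--Schmidt bound above, the Fubini/integration-by-parts arguments behind \eqref{9}--\eqref{24}, and a density argument on polynomials. The genuine obstacle, and the reason the statement is only an alternative, is that $U$ and $S$ need not be compatible: the spectral subspaces of $S=UT$ are in general not $U$-invariant, so they cannot be turned directly into reducing subspaces of $T$ on which $T$ would act selfadjointly. Deciding whether such a reducing subspace exists is exactly the non-simplicity question left open; absent that, the explicit chain $\{\mathfrak{H}_{c_{n}}\}$ delivers the second horn.
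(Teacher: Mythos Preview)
Your proof is correct, and it takes a genuinely different route from the paper. The paper first establishes compactness of $I^{\alpha}_{a+}$ on $L_{2}(I,\omega)$ by combining the H\"older estimate of \cite[Theorem~3.12]{firstab_lit:samko1987} with the Kolmogorov compactness criterion, then applies von~Neumann's invariant subspace theorem to produce a non-trivial invariant subspace $\mathfrak{M}_{1}$; if $\mathfrak{M}_{1}$ fails to meet one of $L^{\pm}_{2}(I,\omega)$ the operator is declared non-simple via \eqref{24}, and otherwise one restricts and iterates. Termination of that iteration would require a finite-dimensional invariant subspace, which the paper rules out by a Tricomi-type argument showing $I^{\alpha}_{a+}$ has no non-zero eigenvalue. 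Your approach sidesteps all of this machinery: the explicit chain $\mathfrak{H}_{c}=\{f:f=0\text{ a.e. on }(a,c)\}$ is manifestly $I^{\alpha}_{a+}$-invariant, strictly nested, and for $c<(a+b)/2$ contains non-zero even and non-zero odd functions about the midpoint (which is exactly what $L^{\pm}_{2}(I,\omega)$ are, the ultraspherical polynomials of a given parity being the symmetric\,/\,antisymmetric ones). This is both shorter and more transparent than the paper's argument.

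Two remarks. First, your construction of $\{\mathfrak{H}_{c_{n}}\}$ does not use the hypothesis that $T$ is simple, so you have in fact shown the second horn unconditionally; the ``alternative'' is then trivially true as a disjunction. This also means your entire opening apparatus --- the involution $U$, the Hilbert--Schmidt bound, the selfadjoint $S=UT$, and the analysis of its eigenvectors --- is never invoked in the actual proof and could be dropped. (Your Hilbert--Schmidt claim is nonetheless correct: the integral you write down is exactly the quantity $J$ that the paper itself uses and asserts finite under the hypotheses $\alpha>1/2$, $\beta<1$, $\beta>\alpha-1/2$.) Second, this observation is arguably a comment on the theorem rather than on your proof: your elementary $\mathfrak{H}_{c}$ chain shows that the second alternative always holds, so the content of Theorem~\ref{T3} is weaker than its bifurcated statement suggests.
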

\begin{proof}
We   provide the proof only for the left-side case,   since the proof corresponding to the right case is   analogous  and can be obtained by     simple   repetition.
Let us show that the operator $I^{\alpha}_{a+}:L_{2}(I,\omega)\rightarrow L_{2}(I,\omega)$ is compact.
   Using  Theorem 3.12 \cite[p.81]{firstab_lit:samko1987}, we have the estimate
\begin{equation}\label{25}
\|I_{a+}^{\alpha}f\|_{H_{0}^{\lambda}(\bar{I},r)}\leq C\|f\|_{L_{2}(I\!,\,\omega)},\,\lambda=\alpha-1/2,
\end{equation}
where $r(x)=(x-a)^{\beta/2}(b-x)^{\beta/2},$ if $\beta>2\alpha-1$ and  $r(x)=(x-a)^{\beta/2}(b-x)^{\alpha-1/2+\delta}$ for sufficiently  small $\delta>0,$ if $\beta\leq2\alpha-1.$
 It can easily be checked that   in the case $(\beta>2\alpha-1),$ we have
$$
 \left(\int\limits_{a}^{b}\left|(I_{a+}^{\alpha})f (x)  \right|^{2} \omega(x)dx\right)^{1/2}=\left(\int\limits_{a}^{b}\left|r(x)(I_{a+}^{\alpha})f (x)  \right|^{2}  dx\right)^{1/2}
\leq C \|I_{a+}^{\alpha}f\|_{H^{\lambda}_{0}(\bar{I},r)},
$$
and in the case $(\beta\leq2\alpha-1),$ we have
$$
 \left(\int\limits_{a}^{b}\left|(I_{a+}^{\alpha})f (x)  \right|^{2} \omega(x)dx\right)^{1/2}=\left(\int\limits_{a}^{b}\left|r(x)(I_{a+}^{\alpha})f (x)   \right|^{2}  (b-x)^{\beta-(2\alpha-1+2\delta)}dx\right)^{1/2}
.$$
Note that $\beta-(2\alpha-1+2\delta)>-1$ for sufficiently small $\delta.$
Therefore
$$
\left(\int\limits_{a}^{b}\left|(I_{a+}^{\alpha})f (x) r(x)  \right|^{2}  (b-x)^{\beta-(2\alpha-1+2\delta)}dx\right)^{1/2}\leq C \|I_{a+}^{\alpha}f\|_{H^{\lambda}_{0}(\bar{I},r)}.
$$
Thus,   using estimate \eqref{25}, we obtain
\begin{equation}\label{26}
\|I_{a+}^{\alpha}f\|_{L_{2}(I\!,\,\omega)}\leq C\|f\|_{L_{2}(I\!,\,\omega)}.
\end{equation}
Now let us use   the Kolmogorov criterion of compactness (see \cite{firstab_lit: Kolmogoroff  A. N.}),   which claims  that a set in the space $L_{p}(I,\omega),\,1\leq p<\infty$ is compact, if this set  is bounded and equicontinuous with respect to the norm $L_{p}(I,\omega).$  Note that    by virtue of  \eqref{26} the set $I_{a+}^{\alpha}(\mathfrak{N})$ is bounded in $L_{2}(I,\omega),$ where  $\mathfrak{N}:=\left\{f: \|f\|_{L_{2}(I\!,\,\omega)}\leq M,\, M>0\right\}.$    Using  \eqref{25},  we get  $\| I_{a+}^{\alpha}f\|_{H_{0}^{\lambda}(\bar{I}\!,\,r)}\leq C_{1},\,\forall f\in \mathfrak{N}.$ Hence  in accordance with the definition,   we have
 $|(I_{a+}^{\alpha}f)(x+t)r(x+t)-(I_{a+}^{\alpha}f)(x)r(x) | < C_{1} t^{\lambda},\,\forall f\in \mathfrak{N},\, \forall x\in [a,b), $ where     $t$ is a sufficiently small positive number.   Under the assumption that     functions have a zero extension  outside  of $\bar{I},$ we have
$$
\left\{\int\limits_{a}^{b}\left|(I_{a+}^{\alpha}f)(x+t) -(I_{a+}^{\alpha}f)(x) \right|^{2}\omega(x)dx\right\}^{\!\!\frac{1}{2}}\leq\left\{\int\limits_{b-t}^{b}\left| (I_{a+}^{\alpha}f)(x) \right|^{2}\omega(x)dx\right\}^{\!\!\frac{1}{2}}+
$$
$$
+\left\{\int\limits_{a}^{b-t}\left|(I_{a+}^{\alpha}f)(x+t) -(I_{a+}^{\alpha}f)(x) \right|^{2}\omega(x)dx\right\}^{\!\!\frac{1}{2}}=\tilde{I} +I.
$$
Assume that $f\in \mathfrak{N}$ and consider  the case $(\beta\leq 2\alpha-1).$
Note that due to  Theorem 3.12 \cite[p.81]{firstab_lit:samko1987},  we obtain
$$
\tilde{I}=\left\{\int\limits_{b-t}^{b}\left| (I_{a+}^{\alpha}f)(x) r(x)\right|^{2}(b-x)^{\beta-\mu}dx\right\}^{\!\!\frac{1}{2}}\leq C_{1}\left\{\,\int\limits_{b-t}^{b} (b-x)^{\beta-\mu}dx\right\}^{\!\!\frac{1}{2}}\leq C t^{\beta-\mu+1},
$$
where $ \mu=2\alpha-1+2\delta,\,r(x)=(x-a)^{\beta/2}(b-x)^{\mu/2}.$
Using the Minkowski inequality, we get
$$
I\leq\left\{\int\limits_{a}^{b-t}\left|(I_{a+}^{\alpha}f)(x+t)r(x+t) -(I_{a+}^{\alpha}f)(x)r(x) \right|^{2}(b-x)^{\beta-\mu}dx\right\}^{\!\!\frac{1}{2}}\!\!\!+
$$
$$
+\left\{\int\limits_{a}^{b-t}\left|(I_{a+}^{\alpha}f)(x+t)[r(x+t) - r(x)] \right|^{2}(b-x)^{\beta-\mu}dx\right\}^{\frac{1}{2}}=I_{1}+I_{2},\,
$$
As before, applying    Theorem 3.12 \cite[p.81]{firstab_lit:samko1987}, we get
$
I_{1}\leq C t^{\alpha-1/2}.
$
  Using the inequality $(\tau+1)^{\nu}<\tau^{\nu}+1,\,\tau>1,\,0<\nu<1,$  we obtain
$$
\left|(x+t-a)^{\beta/2}-(x-a)^{\beta/2}\right|= t^{\beta/2}\left|\left(\frac{x-a}{t}+1\right)^{\beta/2}-\left(\frac{x-a}{t}\right)^{\beta/2}\right|< t^{\beta/2},\,a+t<x<b.
$$
In the same way, using the inequality $(\tau-1)^{\nu}>\tau^{\nu}-1,\,\tau>1,\,0<\nu<1,$ we get
$$
\left|(b-x-t)^{\mu/2}-(b-x)^{\mu/2}\right|< t^{\mu/2} ,\,a<x<b-t.
$$
Since   $r(x)$ is a product of the functions that satisfy the  H\"{o}lder condition,   then it is not hard to prove that
$$
|r(x+t)-r(x)|< C_{2}t^{\beta/2},\,a+t<x<b-t.
$$
Using the fact  $ \| I_{a+}^{\alpha}f\|_{H_{0}^{\lambda}(\bar{I},r }\leq C_{1},$ we get
$$
 I^{2}_{2}\leq C_{1}  \int\limits_{a}^{b-t}r^{-2}(x+t)\left|  r(x+t) - r(x)  \right|^{2}(b-x)^{\beta-\mu}dx  \leq
 $$
 $$
\leq C_{1}  \int\limits_{a+t}^{b-t}r^{-2}(x+t)\left|  r(x+t) - r(x)  \right|^{2}(b-x)^{\beta-\mu}dx+
 $$
 $$
 + C_{1}\int\limits_{a}^{a+t}r^{-2}(x+t)\left|  r(x+t) - r(x)  \right|^{2}(b-x)^{\beta-\mu}dx =
 I_{21}+I_{22}.
$$
Taking into account the above reasoning, we have
$$
  I_{21}\leq C t^{\beta } \int\limits_{a+t}^{b-t}r^{-2}(x+t) (b-x)^{\beta-\mu}dx \leq C t^{2\beta-\mu }  \int\limits_{a+t}^{b-t}(x-a+t)^{-\beta }(b-t-x)^{ -\mu  }       dx =
$$
$$
= C t^{2\beta-\mu } \left\{(b-a)^{1-\beta-\mu} B(1-\beta,1-\mu)-\int\limits_{a-t}^{a+t}(x-a+t)^{-\beta  }(b-t-x)^{ -\mu  }       dx\right\}^{\frac{1}{2}}\leq C t^{2\beta-\mu}\,;
$$
$$
 I_{22}\leq C  \int\limits_{a}^{a+t}(x-a+t)^{-\beta  }(b-t-x)^{ -\mu  }  (b-x)^{\beta-\mu}     dx \leq C  \int\limits_{a}^{a+t}(x-a+t)^{-\beta  }      dx \leq C t^{ 1-\beta  }.
$$
Hence we conclude that $I_{2}\leq C t^{\delta_{1}}$  and as a consequence, we obtain $I\leq C t^{\delta_{2}},$ where $\delta_{1},\delta_{2}$ are some positive numbers.
 To achieve   the case $(\beta>2\alpha-1)$ we should just repeat  the previous reasoning having replaced     $\mu$ by  $\beta.$  The proof is omitted. Thus  in both cases considered above  we obtain
 $$
\forall\varepsilon>0,\,\exists\, t:=t(\varepsilon) :\, \| (I_{a+}^{\alpha}f)(\cdot+t)  -(I_{a+}^{\alpha}f)(\cdot)\|_{L_{2}(I\!,\,\omega)}<\varepsilon,\,\forall f\in \mathfrak{N}.
 $$
It implies that  the conditions of the Kolmogorov criterion of compactness \cite{firstab_lit: Kolmogoroff  A. N.} are fulfilled. Hence  any  bounded set  with respect to the  norm $L_{2}(I,\omega)$ has a compact image. Therefore the operator $I_{a+}^{\alpha}:L_{2}(I,\omega)\rightarrow L_{2}(I,\omega)$ is compact.
Now  applying the von Neumann  theorem \cite[p.204]{firstab_lit: Ahiezer1966}, we   conclude that there exists a non-trivial invariant subspace of the operator $I_{a+}^{\alpha},$  which we   denote by $\mathfrak{M}.$ On the other hand, using the    basis property of the  system $\{p_{n}\}_{0}^{\infty}$, we have
 $L_{2}(I,\omega) =L^{+}_{2}(I,\omega) \oplus L^{-}_{2}(I,\omega).$    It is quite sensible to assume  that
$ \mathfrak{M}\cap L^{+}_{2}(I,\omega)\neq 0,\; \mathfrak{M}\cap L^{-}_{2}(I,\omega)\neq 0.$    If we assume  otherwise, then   we have   an invariant subspace   on which the operator  $I_{a+}^{\alpha}$,
 by virtue of   formulas \eqref{24},  is  selfadjoin   and    we get  the first statement of the alternative. Continuing this line of reasoning, we see that  under  the assumption excluding the first statement of the alternative we come to conclusion that this process  can be finished only in the case, when on   some  step we get a finite-dimensional   invariant subspace. We claim  that it can not be! The proof is by {\it reductio ad absurdum}. Assume the converse, then  we obtain a finite-dimensional restriction  $ \tilde{I} _{a+}^{\alpha}$ of the operator $I_{a+}^{\alpha}.$
Applying the reasoning of   Theorem 2, we can easily  prove that  the point zero is not an eigenvalue of the operator
 $ I_{a+}^{\alpha},$ hence one is not an eigenvalue of the operator
 $ \tilde{I}_{a+}^{\alpha}.$
    It implies that   in accordance with the fundamental theorem of algebra  the operator $\tilde{I} _{a+}^{\alpha}$ has at least one non-zero    eigenvalue (since  $\tilde{I} _{a+}^{\alpha}$ is finite-dimensional). It is clear  that this eigenvalue is an eigenvalue of the operator $I_{a+}^{\alpha}.$   We can write
  \begin{equation}\label{27.0}
  \exists \lambda\in \mathbb{C},\,\lambda\neq 0,\,f\in L_{2}(I,\omega),\,f\neq 0:\;I_{a+}^{\alpha}f=\lambda f\;\;\mathrm{a.e.}
 \end{equation}
 Further,  we use the method described in \cite[p.14]{firstab_lit:Tricomi}.
 Using the Cauchy Schwarz inequality, we get
 \begin{equation}\label{27}
 |f(x)|^{2}\leq |\lambda|^{-2} B(x)   \int\limits_{a}^{x}|f(t)|^{2}\omega(t)dt\leq |\lambda|^{-2}\|f \|^{2}_{L_{2}(I\!,\,\omega)}B(x),
 \end{equation}
 where
 $$
 B(x)=\Gamma^{-1}(\alpha) \int \limits_{a}^{x}(x-t)^{2\alpha-2} \omega^{-1}(t)dt.
 $$
Substituting $f(t)$ for  $|\lambda|^{-2}\|f \|^{2}_{L_{2}(I\!,\,\omega)}B(x)$ in \eqref{27},  we get
$$
|f(x)|^{2}\leq \|f \|^{2}_{L_{2}(I\!,\,\omega)} |\lambda|^{-4}B(x) \int\limits_{a}^{x}B(t)\omega(t)dt.
$$
Continuing  this process, we obtain
$$
|f(x)|^{2}\leq \|f \|^{2}_{L_{2}(I\!,\,\omega)} |\lambda|^{-2(n+1)}B(x) \underbrace{\int\limits_{a}^{x}B(x_{n})\omega(x_{n})dx_{n}\int\limits_{a}^{x_{n}}B(x_{n-1})\omega(x_{n-1})dx_{n-1}...\int\limits_{a}^{x_{2}}}_{\text{n  integrals} }B(x_{1 })\omega(x_{1 }) dx_{1},\,n\in \mathbb{N}.
$$
Let
$$
B_{n}(x):= \int\limits_{a}^{x}B(x_{n})\omega(x_{n})dx_{n}\int\limits_{a}^{x_{n}}B(x_{n-1})\omega(x_{n-1})dx_{n-1}...\int\limits_{a}^{x_{2}} B(x_{1 })\omega(x_{1 }) dx_{1},
$$
thus
\begin{equation}\label{28}
B_{n}(x)= \int\limits_{a}^{x}B(t)B_{n-1}(t)\omega(t)dt,\,B_{0}(x):=1,\,n\in \mathbb{N}.
\end{equation}
Let us show that
$
B_{n}(x)= B^{n}_{1}(x)/n!.
$
It is obviously true  in the case $(n=1).$   Assume that the relation  $
B_{n-1}(x)= B^{n-1}_{1}(x)/(n-1)!
$   is fulfilled   and let us  deduce that
$
B_{n}(x)= B^{n}_{1}(x)/n!.
$
Using   \eqref{28}, we obtain
$$
B_{n}(x)= \frac{1}{(n-1)!}\int\limits_{a}^{x}B(t)B^{n-1}_{1}(t)\omega(t)dt=\frac{1}{(n-1)!}\int\limits_{a}^{x} B^{n-1}_{1}(t)\frac{d B_{1}(t)}{dt} dt= \frac{B^{n }_{1}(x)}{n!}.
$$
Hence
$$
|f(x)|^{2}\leq  \frac{1}{n!}\|f \|^{2}_{L_{2}(I\!,\,\omega)}|\lambda|^{-2(n+1)} B(x)   B^{n }_{1}(x)  ,\,n\in \mathbb{N}.
$$
Using the  Dirichlet formula, we get
$$
  B _{1}(x) \leq  \frac{1}{\Gamma(\alpha)}\int \limits_{a}^{b}\omega(y)dy \int \limits_{a}^{y}(y-t)^{2\alpha-2} \omega^{-1}(t)dt =
  \frac{1}{\Gamma(\alpha)}\int \limits_{a}^{b}\omega^{-1}(t)dt \int \limits_{t}^{b}(y-t)^{2\alpha-2}\omega(y)dy=:J.
$$
By virtue of the  theorem  conditions,    we   conclude that $J<\infty.$ Hence, we have
$$
|f(x)|^{2}\leq  \frac{|\lambda|^{-2(n+1)}J^{n}}{n!}   \|f \|^{2}_{L_{2}(I\!,\,\omega)} B(x) ,\,n\in \mathbb{N}.
$$
Since   $ \left(|\lambda|^{-2(n+1)}J^{n}\right)/ n! \rightarrow 0,\;n\rightarrow \infty,$ then  $f(x)=0,\;x\in I.$
We have obtained the  contradiction with     \eqref{27.0}, which allows us to conclude that   there  does   not exist  a finite dimensional invariant subspace.
 It implies that  we have the sequence of the included invariant   subspaces
  $$
 \mathfrak{M}_{1}\supset\mathfrak{M}_{2}\supset...\supset\mathfrak{M}_{k}\supset...\;\,,
 $$
 $$
 \mathfrak{M}_{k}\cap L^{+}_{2}(I,\omega)\neq 0,\; \mathfrak{M}_{k}\cap L^{-}_{2}(I,\omega)\neq 0,\;k=1,2,... \;\,.
 $$
  \end{proof}
\section{Conclusions}
In this paper, the first our aim  is  to reformulate  in terms of the Jacoby series  coefficients  the previously known theorems describing the Riemann-Liouville operator action in the weighted   spaces    of Lebesgue  p-th power  integrable functions, the second aim is  to approach a little bit  closer to  solving   the problem: whether the Riemann-Liouville operator acting  in the weighted  space of Lebesgue square integrable functions is simple.  The approach, which was used in the paper  is in the following: to use the    Jacobi polynomials special properties   that    alow us  to apply   novel methods of  functional analysis   and
   theory of functions of a real variable, which are rather    different   in comparison with the  perviously applied methods  for studying
    the Riemann-Liouville operator. Besides the main results of the paper, we stress that  there was arranged some systematization of the previously known facts of the Riemann-Liouville operator action in the weighted  spaces of Lebesgue  p-th power  integrable functions, when the weighted function is represented by some kind of  a power function. It should be noted that the previously known description of  the Riemann-Liouville operator action in the weighted  spaces of Lebesgue  p-th power  integrable functions consists of some theorems in which the conditions imposed on the weight   function  have the gaps,  i.e. some cases corresponding to the concrete   weighted functions was not considered.    Motivated by this, among the unification of the known results,  we managed to fill the gaps of the conditions and formulated this result as  a separate  lemma.
   The following main  results were obtained in terms of the Jacoby series coefficients: the theorem on the Riemann-Liouville operator direct action was proved, the existence and uniqueness theorem for Abel equation in the weighted  spaces of Lebesgue  p-th power  integrable functions  was proved and the solution formula was given,
    the alternative  in accordance with which the Riemann-Liouville operator is either simple or one  has the sequence of the included invariant subspaces   was established.     Note that these  results give us such  a view of the fractional calculus that has a lot of     advantages.   For instance,   we can reformulate Theorem \ref{T2} under more general assumptions relative to the integral operator on the left side of equation \eqref{16}, at the same time having   preserved the main scheme of the reasonings. In this case the most important problem may be, in what way we are able to  calculate the coefficients given by formula  \eqref{10}. Besides,   the notorious case $p=1/\alpha,$  which was successfully achieved in this paper  is also worth noticing.  Thus the obtained results make a prerequisite of researching in such   a   direction of fractional calculus.
\subsection*{Acknowledgments}
  Gratitude is expressed to Boris G. Vaculov    for   valuable remarks and comments.

\end{document}